\newcommand{\I}{\mathbf{I}}
\def\C{{\mathbb C}}
\def\eps{\varepsilon}
\def\wh{\widehat}
\def\wt{\widetilde}
\newcommand\bfr{{\mathbf r}}
\newcommand\bfI{{\mathbf I}}
\newcommand\bfA{{\mathbf A}}
\newcommand\bfF{{\mathbf F}}
\newcommand\bfK{{\mathbf K}}
\newcommand\bfL{{\mathbf L}}
\newcommand\bfM{{\mathbf M}}
\newcommand\bfP{{\mathbf P}}
\newcommand\bfQ{{\mathbf Q}}
\newcommand\bfR{{\mathbf R}}
\newcommand\bfS{{\mathbf S}}
\newcommand\bfU{{\mathbf U}}
\newcommand\bfV{{\mathbf V}}
\newcommand\bfY{{\mathbf Y}}
\newcommand\bfZ{{\mathbf Z}}
\newcommand\bfSigma{{\mathbf \Sigma}}
\def\eps{\varepsilon}
\def\phi{\varphi}
\newcommand{\ostar}{\mathbin{\mathpalette\make@circled\star}}
\newcommand{\make@circled}[2]{%
  \ooalign{$\m@th#1\smallbigcirc{#1}$\cr\hidewidth$\m@th#1#2$\hidewidth\cr}%
}
\newcommand{\smallbigcirc}[1]{%
  \vcenter{\hbox{\scalebox{0.77778}{$\m@th#1\bigcirc$}}}%
}
\newcommand\mat{\mathbf{Mat}}
\newcommand\ten{\text{Ten}}
\newcommand{\norm}[1]{\| #1  \|}
\crefname{hypothesis}{Hypothesis}{Hypotheses}
\title{
A parallel Basis Update and Galerkin Integrator for Tree Tensor Networks
\thanks{\funding{The work of Christian Lubich and Dominik Sulz was funded by the Deutsche Forschungsgemeinschaft (DFG, German Research Foundation) through the Research Unit FOR 5413/1, Grant No. 465199066 and the Research Unit TRR 352, Project-ID 470903074.}}}
\author{Gianluca Ceruti\footnotemark[2] \and Jonas Kusch\footnotemark[3] \and \footnotemark[4] \and Dominik Sulz\footnotemark[4]}
\author{Gianluca Ceruti\thanks{University of Innsbruck, Austria (\email{gianluca.ceruti@uibk.ac.at}).}
\and Jonas Kusch\thanks{Norwegian University of Life Sciences, Scientific Computing, \r{A}s, Norway (\email{jonas.kusch@nmbu.no}).}
\and Christian Lubich\thanks{Mathematisches Institut, Universität Tübingen, Auf der Morgenstelle 10, 72076 Tübingen, Germany (\email{lubich@na.uni-tuebingen.de} and \email{dominik.sulz@uni-tuebingen.de}).} \and Dominik Sulz\footnotemark[4]}
\begin{document}

\maketitle

\begin{abstract}
Computing the numerical solution to high-dimensional tensor differential equations can lead to prohibitive computational costs and memory requirements. To reduce the memory and computational footprint, dynamical low-rank approximation (DLRA) has proven to be a promising approach. DLRA represents the solution as a low-rank tensor factorization and evolves the resulting low-rank factors in time. A central challenge in DLRA is to find time integration schemes that are robust to the arising small singular values. A robust parallel basis update \& Galerkin integrator, which simultaneously evolves all low-rank factors, has recently been derived for matrix differential equations. This work extends the parallel low-rank matrix integrator to Tucker tensors and general tree tensor networks, yielding an algorithm in which all bases and connecting tensors are evolved in parallel over a time step. We formulate the algorithm, provide a robust error bound, and demonstrate the efficiency of the new integrators for problems in quantum many-body physics, uncertainty quantification, and radiative transfer. 
\end{abstract}

\begin{keywords}
dynamical low-rank approximation, tensor differential equations
\end{keywords}

\begin{MSCcodes}
65L05, 65L20, 65L70, 15A69
\end{MSCcodes}

\section{Introduction}\label{sec:intro}
Tree tensor networks (TTNs) are a data-sparse hierarchically structured multilinear parametrization of high-order tensors. The parameters are provided by basis matrices at the leaves of the tree and low-order connecting tensors at the inner nodes of the tree. TTNs have been developed independently in chemistry, physics and mathematics; see e.g. \cite{Wang2003,shi2006classical,hackbusch2012tensor}. They have been successfully used to approximate solutions to high-dimensional evolution and optimization problems. For evolution equations, the Dirac--Frenkel time-dependent variational principle (TDVP) yields a coupled system of differential equations for the basis matrices and connecting tensors \cite{Wang2003,lubich2013dynamical}, which needs to be integrated in time numerically.

Standard general-purpose time integration methods are not suitable for TTNs, because their stepsizes are restricted by the smallest singular values of matricizations of the connecting tensors, which are typically tiny. Recently, specific TTN integrators that are robust to small singular values have been devised in \cite{CLW2021} and \cite{CLS2023}. 
They admit convergent error bounds that are independent of small singular values and
related rapid changes in orthonormal factors in the time-continuous TDVP.
These robust TTN integrators extend, in a nonobvious but systematic way, robust integrators for the dynamical low-rank approximation of matrix differential equations developed in \cite{lubich2014projector} --- the projector-splitting integrator --- and in \cite{ceruti2022rank} --- the rank-adaptive basis update \& Galerkin (BUG) integrator, respectively.

In the TTN integrator of \cite{CLW2021}, the multilinear ranks of the connecting tensors (bond dimensions in physical terminology) are fixed, and differential equations at the nodes of the tree for the basis matrices and connecting tensors are solved sequentially, one after the other. The TTN integrator of \cite{CLS2023} determines the ranks adaptively and solves the differential equations at nodes of the same hierarchical level in parallel, but still sequentially from one level to the next from the leaves to the root.

In this paper we propose and analyze a TTN integrator that  
\begin{itemize}
  \item is robust to small singular values,
  \item is rank-adaptive, and
  \item {\it solves all differential equations at the nodes of the tree in parallel\/} within each time step.
\end{itemize}
A sequential transfer between hierarchical levels remains only in the numerical linear algebra (hierarchical orthogonalization). Besides its improved parallelism, the novel TTN integrator does not require an augmented-rank update of the connecting tensors as in~\cite{CLS2023}. In contrast to integrators based on projector-splitting such as \cite{CLW2021}, it does not solve any differential equations backward in time. This is favorable for dissipative problems, where backward propagation steps are potentially problematic.

We derive the algorithm, give a robust first-order error bound and complement the analytical results by numerical experiments for quantum spin systems and for uncertainty quantification in radiative transfer.

This parallel TTN integrator is a nontrivial extension of the parallel low-rank matrix integrator of  \cite{ceruti2023parallel}. The central idea of that integrator is to approximate the numerical solution of the rank-adaptive BUG integrator of \cite{ceruti2022rank} by a first-order approximation consisting only of terms that can be computed in parallel. An extension to second order has recently been given in \cite{kusch2024second}.

Matrix product states (MPS) \cite{perez2006matrix} or synonymously tensor trains \cite{oseledets2011tensor}
are a class of tensor networks that is widely used in quantum physics; see the recent reviews \cite{cirac2021matrix,paeckel2019time} and numerous references therein. MPS can be viewed as TTNs on the tallest tree for a given number of nodes. 
The projector-splitting integrator for MPS \cite{lubich2015time,haegeman2016unifying} has become popular in quantum physics under the misnomer ``the TDVP algorithm'' (note that there are a variety of very different algorithms discretizing the time-continuous TDVP, e.g.~BUG integrators, and not just the projector-splitting integrator).
A parallel TDVP algorithm for MPS has been proposed in \cite{secular2020parallel}. That integrator achieves parallelism in a remarkable way, which is conceptually different from the construction of the parallel integrator considered here. The parallel integrator of \cite{secular2020parallel} is not constructed to have the same robustness to small singular values and related rapidly changing orthonormal factors and cannot be expected to have a robust error bound. The new parallel TTN integrator studied here can in particular be applied to MPS, where it could be used with existing optimized MPS software, which at present is not fully available for general TTNs. On the other hand, TTNs on balanced binary trees (i.e.~binary trees of minimal height) have been observed to require smaller ranks (bond dimensions) and fewer parameters than MPS in simulations of quantum spin systems \cite{CLS2023,sulz2024numerical} and may thus offer advantages over~MPS.

The paper is structured as follows. After the introduction in Section~\ref{sec:intro}, the parallel matrix integrator is reviewed in Section~\ref{sec:recapParallel}. Section~\ref{sec:parallelTucker} presents the extension of this integrator to Tucker tensors, together with the derivation of a robust error bound. Then, Section~\ref{sec:TTN} extends this derivation to tree tensor networks. The efficiency of the presented integrators is demonstrated by numerical experiments in Section~\ref{sec:resultsnum}.

\section{A modified parallel Basis-Update \& Galerkin matrix integrator}\label{sec:recapParallel}
We begin by reviewing a modified version of the parallel BUG integrator of \cite{ceruti2023parallel} for a matrix-valued problem of the form 
\begin{equation} \label{eq:matfulleq}
\dot{\bfA}(t) = \bfF(\bfA(t)), \quad \bfA(t_0) = \bfA_0\, ,
\end{equation}
where $\bfA(t)\in\mathbb{C}^{m \times n}$ and $\bfF : \mathbb{C}^{m\times n} \rightarrow \mathbb{C}^{m\times n}$. 
We compute factorized low-rank approximations at $t_k = t_0+kh$, of varying rank $r_k\ll m,n$, 
$$
\bfY_k = \bfU_k \bfS_k \bfV_k^* \approx \bfA(t_k),
$$
where the left and right basis matrices $\bfU_k \in \C^{m\times r_k}$ and $\bfV_k \in \C^{n\times r_k}$ have orthonormal columns and the coefficient matrix $\bfS_k\in \C^{r_k\times r_k}$ is nonsingular.
We describe one time step of the method, updating from $t_0$ to $t_1$. This procedure is then repeated to advance the solution approximation to $t_2,t_3,$ etc.

Given the factorized rank-$r$ numerical solution $\bfY_0 = \bfU_0 \bfS_0 \bfV_0^{*}$ at time $t_0$, one step of the modified parallel integrator from $t_0$ to $t_1 = t_0 + h$ reads as follows:
	\begin{enumerate}
		\item 
		Compute augmented basis matrices $\wh \bfU\in \C^{m\times 2r}$ and $\wh \bfV\in \C^{n\times 2r}$ as well as the coefficient matrix ${\bfS}(t_1) \in\C^{r \times r}$ (\emph{in parallel}):
		\\[2mm]
    		\textbf{K-step}:
    		From $t=t_0$ to $t_1$, integrate the $m \times r$ matrix differential equation
    		\begin{equation}\label{K-stepmatrix} 
    		\dot{\textbf{K}}(t) = \bfF( \textbf{K}(t) \bfV_0^*) \bfV_0, \qquad \textbf{K}(t_0) = \bfU_0 \bfS_0.
    		\end{equation}
    		Construct $\wh \bfU = (\bfU_0, \widetilde \bfU_1) \in \C^{m\times 2r}$  
      as an orthonormal basis of the range 
    		of the $m\times 2r$ matrix $(\bfU_0, \textbf{K}(t_1))$ (e.g.~by QR decomposition), where $ \widetilde \bfU_1 \in \C^{m\times r}$ 
    		is filled with zero columns if $(\bfU_0, \textbf{K}(t_1))$ has rank less than $2r$.\\
    		Compute the 
            $r\times r$ matrix $$\widetilde \bfS_1^K = h\widetilde\bfU_1^{*}\bfF(\bfY_0)\bfV_0\in\C^{r\times r} \, .$$
		\\
                \textbf{L-step}: 
    		From $t=t_0$ to $t_1$, integrate the $n \times r$ matrix differential equation
    		\begin{equation}\label{L-step} 
    		\dot{\textbf{L}}(t) =\bfF( \bfU_0 \textbf{L}(t)^*)^*  \bfU_0, \qquad \textbf{L}(t_0) = \bfV_0 {\bfS}_0^*. 
    		\end{equation}
    		Construct $\wh \bfV = (\bfV_0, \widetilde \bfV_1) \in \C^{n\times 2r}$ as an orthonormal basis of the range 
    		of the $n\times 2r$ matrix $(\bfV_0, \textbf{L}(t_1))$ (e.g.~by QR decomposition), where $ \widetilde \bfV_1 \in \C^{n\times r}$ 
    		is filled with zero columns if $(\bfV_0, \textbf{L}(t_1))$ has rank less than $2r$.\\
    		Compute the 
            $r\times r$ matrix  $$\widetilde \bfS_1^L = h\bfU_0^{*}\bfF(\bfY_0)\widetilde\bfV_1\in\C^{r\times r} \, .$$
            \\
    		\textbf{S-step}:
    		From $t=t_0$ to $t_1$, integrate the $r \times r$ matrix differential equation
    		\begin{equation}\label{S-step} 
        		\dot{\bfS}(t) =  \bfU_0^* \bfF( \bfU_0 \bfS(t) \bfV_0^*) \bfV_0, 
        		\qquad \bfS(t_0) = \bfS_0.
              \end{equation}
		\item \textbf{Augment}:
		Form the augmented coefficient matrix $\widehat \bfS_1 \in\C^{2r\times 2r}$ as
		\begin{align} \label{wh-S1}
                \widehat \bfS_1 = 
                \begin{pmatrix} 
                     \bfS(t_1) & \widetilde \bfS_1^L \\
                    \widetilde \bfS_1^K & \mathbf{0}
                \end{pmatrix}.
            \end{align}
            \item \textbf{Truncate:} Compute the singular value decomposition $\wh \bfS_1 = \wh \bfP \wh \bfSigma \wh \bfQ^*$. For a given truncation tolerance $\vartheta$, set $r_1$ such that 
        \begin{align*}
            \left( \sum_{k=r_1 + 1}^{2 r} \sigma_k^2 \right)^{1/2} \leq \vartheta,
        \end{align*}
        where $\sigma_k$ are the singular values in the diagonal matrix $\wh \bfSigma$. Set $\bfP, \bfQ \in \C^{2r \times r_1}$ as the matrices of the first $r_1$ columns of $\wh \bfP$ and $\wh \bfQ$, respectively. Set $\bfU_1 = \wh \bfU \bfP$ and $\bfV_1 = \wh \bfV \bfQ$ and finally 
        $\bfS_1 = \bfP^* \wh \bfS_1  \bfQ \in \C^{r_1 \times r_1}$.
	\end{enumerate} 

          This yields an approximation of rank $r_1 \le 2 r$ in factorized form:
            $$
            \bfY_1 = \bfU_1 \bfS_1 \bfV_1^* \approx \bfA(t_1),
            $$
            where $\bfU_1\in \C^{m\times r_1}$ and $\bfV_1\in \C^{n\times r_1}$ again have orthonormal columns, and
            $\bfS_1\in \C^{r_1\times r_1}$ is invertible.
            
We emphasize that within one time step, all appearing differential equations in this algorithm can be solved in parallel, as in \cite{ceruti2023parallel}.
This version differs from the original algorithm of \cite{ceruti2023parallel} in that the original values $\widetilde \bfS_{1,\mathrm{orig}}^K = \widetilde \bfU_1^{*}\bfK(t_1)$ and $\widetilde \bfS_{1, \mathrm{orig}}^L = \bfL(t_1)^{*}\widetilde \bfV_{1}$  are replaced by first-order approximations:
\begin{align*}
    \widetilde \bfU_1^{*}\bfK(t_1) = \widetilde \bfU_1^{*}\left(\bfK_0 + h\bfF(\bfY_0)\bfV_0 + O(h^2)\right) = \widetilde \bfS_1^K + O(h^2)\,,
\end{align*}
where we used that $\widetilde \bfU_1^{*}\bfK_0 = 0$. Analogously, we obtain $\bfL(t_1)^{*}\widetilde \bfV_{1} = \widetilde \bfS_1^L + O(h^2)$. 

Note that the coefficient matrix in the $\bfS$-step is updated at rank \(r\) rather than rank \(2r\) as required by the rank-adaptive BUG integrator of \cite{ceruti2022rank}.

 As in \cite{ceruti2022rank} and \cite{ceruti2023parallel}, the modified parallel integrator has rank-adaptivity and advances all differential equations forward in time, as opposed to the projector-splitting algorithm of \cite{lubich2014projector}, in which $\bfK$-, $\bfS$- and $\bfL$-steps are done strictly sequentially in this order with a backward-in-time step for $\bfS$. 

The robust a-priori error bound obtained for the original parallel BUG integrator \cite[Theorem 4.1]{ceruti2023parallel} (see also \cite{kieri2016discretized,ceruti2022unconventional, ceruti2022rank} for related error bounds that are robust to small singular values)
extends straightforwardly to the modified parallel integrator: provided that $\Vert \bfY_0 - \bfA(t_0)\Vert_{\rm F}\le \delta$,
\begin{align*}
\Vert \bfY_k - \bfA(t_k)\Vert_{\rm F} \leq   c_1 h + c_2 \varepsilon + c_3\delta + c_4 k\vartheta,  \qquad t_0 \le t_k \le T,
\end{align*}
 where the constants \(c_i\) are independent of small singular values in both the numerical and exact solutions (but depend on $T$). Here, the vector field \(\bfF\) is assumed to be Lipschitz continuous and bounded, with a small \(\varepsilon\) model error induced by the dynamical low-rank approximation ansatz. More precisely, denoting \( \text{P}_k(Y) \)  the projection onto the tangent space \( \mathcal{T}_\bfY \mathcal{M}_{r_k} \) of the manifold of rank-\(r_k\) matrices at 
$\bfY\in \mathcal{M}_{r_k}$, and \( \text{P}_k^\perp(\bfY) = \mathrm{I}-\text{P}_k(\bfY) \) the orthogonal projection onto the normal space, the vector field is assumed to be almost tangential to the manifold for $\bfY\in \mathcal{M}_{r_k}$ in a small neighborhood of $\bfA(t)$ for $t_k\le t \le t_{k+1}$:
\[ 
    \| \mathrm{P}_k^\perp(\bfY)\bfF(\bfY) \|_{\rm F} \le \varepsilon \, .
\]
The model error \(\varepsilon\) is often unknown in practice. The family of BUG integrators allows for a computationally accessible estimation of the normal component after each time step of the algorithm, which is given by (written here for the first step $k=0$)
\begin{equation}\label{eps}
\| \mathrm{P}^\perp(\bfY_0) \bfF(\bfY_0) \| \approx \| \mathrm{P}^\perp(\bfY_0) [\wh \bfU\wh \bfU^* \bfF(\bfY_0) \wh\bfV\wh\bfV^*] \| = \| \widetilde \bfU_1^* \bfF(\bfY_0) \widetilde \bfV_1\| =: \eta \, .
\end{equation}
Computing \(\eta \) thus provides an estimate of \(\varepsilon\), which can be used to recompute a time update with a larger basis when \(\eta\) exceeds some threshold. We refer to \cite{ceruti2023parallel} for a detailed explanation of the aforementioned estimation strategy.

\section{Parallel Tucker tensor integrator}\label{sec:parallelTucker}
In the following, we present a parallel BUG integrator for problems of the form 
\begin{align*}
\dot{A}(t) = F(A(t)), \quad A(t_0) = A_0\, ,
\end{align*}
where $A(t)\in\mathbb{C}^{n_1 \times \cdots \times n_d}$ is the unknown tensor-valued solution and 
$F:\mathbb{C}^{n_1 \times \cdots \times n_d}\to\mathbb{C}^{n_1 \times \cdots \times n_d}$ is given.

The $i$-th matricization $\mat_i(A) \in \C^{n_i \times n_{\lnot i}}$ with $n_{\lnot i} = \prod_{j\neq i}n_j$ denotes the matrix where the $k$th row aligns all entries of A that have $k$ as the $i$th subscript, whereas $\ten_i$ denotes the inverse operation. 
The matricization of $F$ onto mode $i$ is denoted as 
$$
\bfF_i( \bfZ) := \textbf{Mat}_i(F(Ten_i(\bfZ))).
$$
We compute factorized low-rank tensor approximations at $t_k = t_0+kh$, 
of varying multilinear rank $\bfr^k=(r_1^k,\dots, r_d^k)$ with $r_i^k \ll n_i$ in the factorized Tucker tensor format
(see e.g. \cite{kolda2009tensor})
$$
Y^k = C^k \bigtimes_{i=1}^d \bfU_i^k \approx A(t_k),
$$
where the basis matrices $\bfU_i^k \in \C^{n_i\times r_i^k}$ have orthonormal columns and the core tensor 
$C^k\in \C^{r_1^k \times\dots \times r_d^k}$ has full multilinear rank $\bfr^k$.
We describe one time step of the method, going from $t_0$ to $t_1$. This procedure is then repeated to advance the solution approximation to $t_2,t_3,$ etc.

\subsection{Formulation of the algorithm}
Given the numerical solution in Tucker format $Y^0 = C^0 \bigtimes_{i=1}^d \bfU_i^0$ of
multilinear rank $\bfr = (r_1, \cdots, r_d)$ at time $t_0$, one step of the parallel Tucker tensor integrator from $t_0$ to $t_1$ reads:
	\begin{enumerate}
		\item Compute augmented basis matrices $\wh \bfU_i \in \C^{n_i\times 2r_i}$ $(i=1,\dots,d)$ and an updated core tensor $C(t_1)\in \C^{r_1 \times\dots \times r_d}$ (\emph{in parallel}):
		\\[2mm]
		\textbf{K$_i$-step} (for $i = 1,\dots, d$): Factorize (e.g.~by QR decomposition)
		\begin{align*}
		\textbf{Mat}_i(C^0)^\top = \bfQ_i \bfS_i^\top,
\end{align*}	
where $\bfQ_i\in \C^{r_{\neg i}\times r_i}$ with $r_{\lnot i} = \prod_{j\neq i}r_j$ has orthonormal columns
and $\bfS_i\in \C^{r_i\times r_i}$. From $t_0$ to $t_1$,
		integrate the $n_i \times r_i$ matrix differential equation
		\begin{equation}\label{K-step} 
		\dot{\textbf{K}}_i(t) = \bfF_i( \textbf{K}_i(t) \bfV_i^{0,*}) \bfV_i^0, \qquad \textbf{K}_i(t_0) = \bfU_i^0 \bfS_i^0,
		\end{equation}
		where $\bfV_i^{0,*} = \bfQ_i^{\top} \bigotimes_{j\neq i}^d \bfU_j^{0,\top} \in\C^{r_i \times n_{\lnot i}}$. 
		Construct $\wh \bfU_i = (\bfU_i^0, \widetilde \bfU_i^1) \in \C^{n_i\times 2r_i}$ as an orthonormal basis of the range 
		of the $n_i\times 2r_i$ matrix $(\bfU_i^0, \textbf{K}(t_1))$ (e.g.~by QR decomposition) such that the first $r_i$ columns equal $\bfU_i^0$. $\wh \bfU_i$ is filled with zero columns if $(\bfU_i^0, \textbf{K}(t_1))$ has rank less than $2r$. 
		Compute the tensor $\widetilde C_i^1 = hF(Y_0)\bigtimes_{j\neq i} \bfU_i^{0,*} \times_i \widetilde\bfU_i^{1,*} \in\C^{r_1\times\cdots\times r_d}$. 
		\\[2mm]
		\textbf{C-step}:
		From $t=t_0$ to $t_1$, integrate the $r_1 \times \cdots \times r_d$ tensor differential equation
		\begin{equation}\label{C-step} 
		\dot{C}(t) = F\Bigl( C(t) \bigtimes_{\ell=1}^d\bfU_{\ell}^{0}\Bigr) \bigtimes_{j=1}^d\bfU_j^{0, *}, 
		\qquad C(t_0) = C^0.
		\end{equation}
		\item \textbf{Augment}:
		Construct the augmented core tensor $\widehat C^1 \in\C^{2r_1\times\cdots\times 2r_d}$ such that
        \begin{align}
    		&\widehat C^1 \bigtimes_{j=1}^d (\bfI_{r_j}, \bm{0}_{r_j}) = {C}(t_1)\,,\label{eq:barC}\\
    		&\widehat C^1 \bigtimes_{j\neq i} (\bfI_{r_j}, \bm{0}_{r_j}) \times_i ( \bm{0}_{r_i}, \bfI_{r_i}) = \widetilde C_i^1 \quad \text{for } i = 1,\dots,d\, ,\label{eq:tildeC}
		\end{align} 
		and zero entries elsewhere. Note $(\bfI_{r_j}, \bm{0}_{r_j})= \bfU_j^{0, *}\wh \bfU_j$ and 
  $( \bm{0}_{r_i}, \bfI_{r_i})= \widetilde \bfU_i^{1, *}\wh \bfU_i$.
        \item \textbf{Truncate:} For $i=1,\dots,m$ compute (in parallel) the reduced singular value decompositions $\mat_i(\wh C^1) = \wh \bfP_i \wh \bfSigma_i \wh \bfQ_i^*$. Then set $r_i^1$ such that 
        \begin{align*}
            \left( \sum_{k=r_i^1 + 1}^{2 r_i} \sigma_k^2 \right)^{1/2} \leq \vartheta,
        \end{align*}
        where $\sigma_k$ are the singular values in the diagonal matrix $\wh \bfSigma_i$. Set $\bfP_i \in \C^{2r_i \times r_i^1}$ as the matrix of the first $r_i^1$ columns of $\wh \bfP_i$. Set $\bfU_i^1 = \wh \bfU_i \bfP_i$ and finally $C^1 = \wh C^1 \bigtimes_{i=1}^m \bfP_i^* \in \C^{r_1^1 \times \dots \times r_d^1}$.
	\end{enumerate} 
	Like in the matrix version of Section~\ref{sec:recapParallel}, all differential equations can be solved fully in parallel. Further, we note that the filling with zero columns in the $\wh \bfU_i$ matrices is not necessary to implement the algorithm, but makes the presentation simpler.
 
 We remark that this integrator is similar to the rank-adaptive BUG integrator for Tucker tensors as proposed in \cite{ceruti2022rank}. The central difference between the two integrators is that the parallel integrator performs a rank-$\mathbf{r}$ coefficient update in parallel, followed by the augmentation step, whereas the rank-adaptive BUG integrator instead performs a sequential rank-$2\mathbf{r}$ core tensor update.
 Before the truncation step, the rank-adaptive and the parallel BUG integrators only differ in the updated core tensor. That is, denoting the solution of the rank-adaptive BUG integrator as $\widehat Y_{\mathrm{BUG}}^1$ we have $\widehat Y_{\mathrm{BUG}}^1 = \widehat C^1_{\mathrm{BUG}} \bigtimes_{i=1}^d \wh\bfU_i$ compared to the solution of the parallel BUG integrator $\widehat Y^1 = \widehat C^1 \bigtimes_{i=1}^d \wh\bfU_i$. In the following, we use this observation to derive a robust error bound by bounding the distance between these two integrators.
	\subsection{Robust error bound}
	The presented parallel Tucker integrator inherits the robust error bound of the rank-adaptive BUG low-rank matrix integrator \cite{ceruti2022rank} for Tucker tensors. Let $\mathcal{V} = \C^{n_1 \times \dots \times n_d}$ and $\mathcal{M}^k$ the manifold of tensors of multilinear rank $\bfr^k$ for the $k$th time step. We assume that
 with respect to the tensor norm $\|\cdot\|$, which is the Euclidean norm of the vector of tensor entries,
	\begin{enumerate}
	\item $F: \mathcal{V} \rightarrow \mathcal{V}$ is Lipschitz-continuous and bounded, i.e.,
    \begin{align*}
        \norm{F(Y) - F(\wt Y)} &\leq L \norm{Y - \wt Y} \quad &&\text{for all} \ Y, \wt Y \in \mathcal{V}\\
        \norm{F(Y)} &\leq B \quad &&\text{for all} \ Y \in \mathcal{V}.
    \end{align*}
    \item For $Y\in\mathcal{M}^k$ near the exact solution $A(t)$ for $t\in [t_k,t_{k+1}]$ we assume, with the
    orthogonal projection $P^k(Y)$ onto the tangent space $\mathcal{T}_Y \mathcal{M}^k$ and the normal projection $P^k(Y)^\perp=I-P^k(Y)$,
    \begin{align*}
        \norm{P^k(Y)^\perp F(Y) } \leq \eps.
    \end{align*}
	\item The error of the initial value is bounded by $\Vert Y_0 - A(0)\Vert \leq \delta$.
	\end{enumerate}
	Then, there is the following robust error bound.
	\begin{theorem}\label{thm:error_bound_tucker}
		Under assumptions 1. to 3., the error of the parallel Tucker integrator is bounded by
		\begin{align*}
		\Vert Y_k - A(t_k) \Vert  \le c_1 h + c_2 \varepsilon + c_3 \delta + c_4 k\vartheta,
  \qquad t_0 \le t_k \le T,
		\end{align*}
		where all $c_i$ only depend on the bound and Lipschitz constant of $F$ and on $T$.
  In particular, the $c_i$ are independent of the singular values of matricizations of the core tensor.
	\end{theorem}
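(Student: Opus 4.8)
The plan is to compare the parallel Tucker integrator with the rank-adaptive BUG Tucker integrator of \cite{ceruti2022rank}, whose robust error bound holds under Assumptions~1--3, and to transfer that bound through a one-step comparison. Both integrators run the same $\mathrm{K}_i$-steps, hence produce the same augmented bases $\wh\bfU_i$, and --- as observed above --- before truncation they differ only in the augmented core: $\wh Y^1 = \wh C^1 \bigtimes_i \wh\bfU_i$ for the parallel integrator versus $\wh Y^1_{\mathrm{BUG}} = \wh C^1_{\mathrm{BUG}} \bigtimes_i \wh\bfU_i$, where $\wh C^1_{\mathrm{BUG}}$ solves the Galerkin equation $\dot{\wh C} = F(\wh C \bigtimes_\ell \wh\bfU_\ell)\bigtimes_j \wh\bfU_j^{*}$ with embedded initial value $\wh C(t_0)$ satisfying $\wh C(t_0)\bigtimes_\ell \wh\bfU_\ell = Y^0$. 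I would first establish a one-step estimate $\|\wh Y^1 - \wh Y^1_{\mathrm{BUG}}\| \le \hat c_1 h^2 + \hat c_2 h\eps$ (started from a common $Y^0\in\mathcal M^0$ near the exact solution) and then feed it into the Lady Windermere's fan argument underlying the bound in \cite{ceruti2022rank}.

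For the one-step estimate I would expand $\wh C^1_{\mathrm{BUG}}$ in integral form, using only Lipschitz continuity and boundedness of $F$: since $\|\wh C(t) - \wh C(t_0)\| \le (t-t_0)B$ and $\wh C(t_0)\bigtimes_\ell\wh\bfU_\ell = Y^0$, one gets $\wh C^1_{\mathrm{BUG}} = \wh C(t_0) + h\, F(Y^0)\bigtimes_j\wh\bfU_j^{*} + \bigo(h^2)$. Splitting every mode according to $\wh\bfU_j = (\bfU_j^0, \wt\bfU_j^1)$ yields blocks of $\wh C^1_{\mathrm{BUG}}$ indexed by the set $S\subseteq\{1,\dots,d\}$ of ``new'' modes. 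The block with $S=\emptyset$ equals $C^0 + h\,F(Y^0)\bigtimes_j\bfU_j^{0,*} + \bigo(h^2)$, which --- by the same integral estimate applied to the C-step \eqref{C-step} --- equals $C(t_1) + \bigo(h^2)$; the block with $S=\{i\}$ equals $h\,F(Y^0)\bigtimes_{j\neq i}\bfU_j^{0,*}\times_i\wt\bfU_i^{1,*} + \bigo(h^2) = \wt C_i^1 + \bigo(h^2)$. Hence these blocks agree with those of the parallel $\wh C^1$ up to $\bigo(h^2)$, and the only leading-order discrepancy is in the blocks with $|S|\ge 2$, which vanish in $\wh C^1$ but equal $h\,F(Y^0)\bigtimes_{j\notin S}\bfU_j^{0,*}\bigtimes_{i\in S}\wt\bfU_i^{1,*} + \bigo(h^2)$ in $\wh C^1_{\mathrm{BUG}}$.

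The crux of the proof --- and the step I expect to be the main obstacle --- is to show that these $|S|\ge 2$ blocks are not merely $\bigo(h)$ but $\bigo(h\eps)$. When reconstructing $\wh Y^1_{\mathrm{BUG}}$, the contribution of such a block is $h\,F(Y^0)\bigtimes_{j\notin S}(\bfU_j^0\bfU_j^{0,*})\bigtimes_{i\in S}(\wt\bfU_i^1\wt\bfU_i^{1,*}) + \bigo(h^2)$. Since the columns of $\wt\bfU_i^1$ are orthogonal to those of $\bfU_i^0$, the orthogonal projection $\wt\bfU_i^1\wt\bfU_i^{1,*}$ is a subprojection of $\bfI - \bfU_i^0\bfU_i^{0,*}$; and for $|S|\ge 2$ the tensor-space projection acting as $\bfI-\bfU_i^0\bfU_i^{0,*}$ on the modes $i\in S$ and as $\bfU_j^0\bfU_j^{0,*}$ on the modes $j\notin S$ is dominated by the normal projection $P^{\perp}(Y^0)$ onto $(\mathcal T_{Y^0}\mathcal M^0)^{\perp}$ --- because every Tucker tangent vector $\dot C\bigtimes_\ell\bfU_\ell^0 + \sum_i C^0\bigtimes_{j\neq i}\bfU_j^0\times_i\dot\bfU_i$ (with $\dot\bfU_i^{*}\bfU_i^0 = 0$) lies in the orthogonal sum of the mode-patterns with $|S|\le 1$, so the normal space contains the mutually orthogonal ranges of the $|S|\ge 2$ patterns. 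Consequently each such block has norm $\le h\|P^{\perp}(Y^0)F(Y^0)\| + \bigo(h^2)\le h\eps + \bigo(h^2)$ by Assumption~2, and since the at most $2^d$ blocks are mutually orthogonal, $\|\wh Y^1 - \wh Y^1_{\mathrm{BUG}}\| = \|\wh C^1 - \wh C^1_{\mathrm{BUG}}\| \le \hat c_1 h^2 + \hat c_2 h\eps$ with $\hat c_1,\hat c_2$ depending only on $d,L,B$. (The matrix case of Section~\ref{sec:recapParallel} is the instance $d=2$, $S=\{1,2\}$, where this reduces to $\|\wt\bfU_1^{*}F(\bfY_0)\wt\bfV_1\|\le\|P^{\perp}(\bfY_0)F(\bfY_0)\|\le\eps$.)

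Finally I would close the estimate. Mode-wise truncation at tolerance $\vartheta$ moves each of $\wh Y^1$, $\wh Y^1_{\mathrm{BUG}}$ by at most $c_\vartheta\vartheta$, so the one-step maps $\Phi_h$ of the parallel and $\Phi_h^{\mathrm{BUG}}$ of the rank-adaptive BUG integrator satisfy $\|\Phi_h(Y) - \Phi_h^{\mathrm{BUG}}(Y)\|\le \hat c_1 h^2 + \hat c_2 h\eps + 2c_\vartheta\vartheta$ uniformly for $Y$ near the exact solution. The robust bound of \cite{ceruti2022rank} for $\Phi_h^{\mathrm{BUG}}$ is proved by combining a local error estimate of size $\bigo(h^2 + h\eps + \vartheta)$ with the Lipschitz dependence of the exact flow in a Lady Windermere's fan; adding a uniform perturbation of the same order $\bigo(h^2 + h\eps + \vartheta)$ to the one-step map changes neither the order of the local error nor the stability constant, so the identical argument applies to $\Phi_h$. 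Accumulating the $\bigo(1/h)$ local errors with the $\bigo(h)$-perturbed Lipschitz constant of the flow and starting from $\|Y_0 - A(t_0)\|\le\delta$ yields $\|Y_k - A(t_k)\|\le c_1 h + c_2\eps + c_3\delta + c_4 k\vartheta$ for $t_0\le t_k\le T$, with the $c_i$ depending only on $L,B,d,T$ --- in particular not on the singular values of matricizations of the core tensor, since the constants of \cite{ceruti2022rank} and $\hat c_1,\hat c_2$ all have this property. As usual, the same induction shows that the numerical solution stays in the neighborhood where Assumption~2 holds, so the bound is self-consistent.
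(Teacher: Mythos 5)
Your proposal is correct and follows essentially the same route as the paper's proof: compare the parallel step with the rank-adaptive BUG step blockwise in the augmented core (same augmented bases), bound the $|S|\le 1$ blocks by $\bigo(h^2)$ via Lipschitz continuity and boundedness of $F$, bound the $|S|\ge 2$ blocks by $h\varepsilon+\bigo(h^2)$ using exactly the paper's key observation that these mode patterns are annihilated by the Tucker tangent-space projection at $Y^0$ and hence only see the normal component of $F$, and then conclude with truncation plus Lady Windermere's fan. The only differences are cosmetic (first-order Taylor expansions and comparison of truncated one-step maps, versus the paper's integral representations and per-step truncation bound), so no further comment is needed.
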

	\begin{proof}
		We notice that the local error can be bounded by
		\begin{align*}
		\Vert Y^1 - A(t_1) \Vert \le 
  \Vert Y^1 - \wh Y^1 \Vert + \Vert \wh Y^1 - A(t_1) \Vert \le \vartheta +
   \Vert\wh  Y^1 - \widehat Y_{\mathrm{BUG}}^1 \Vert + \Vert \wh Y_{\mathrm{BUG}}^1 - A(t_1) \Vert  .
		\end{align*}
		By \cite{ceruti2022rank}, the local error of the rank-adaptive BUG integrator is bounded by
  $$
  \Vert \wh Y_{\mathrm{BUG}}^1 - A(t_1) \Vert \leq c_1h^2 + c_2h\varepsilon+c_3 h\delta,
  $$
  and so we only need to bound
		\begin{align*}
			\Vert \wh Y^1 - \widehat Y_{\mathrm{BUG}}^1 \Vert = \Vert \widehat C^1 - \widehat C^1_{\mathrm{BUG}} \Vert\, .
		\end{align*}
		We now investigate the norms of individual blocks in $\widehat C^1 - \widehat C^1_{\mathrm{BUG}}$. Here, we need to investigate three parts: The block that belongs to \eqref{eq:barC}, blocks belonging to \eqref{eq:tildeC}, and blocks that are set to zero.
  \begin{enumerate}
      \item To investigate the local error of the integrator in the block that belongs to \eqref{eq:barC}, we define $\bar Y(t) :=  C(t)\bigtimes_{i=1}^d \bfU_i^0$, $\wh Y_{\mathrm{BUG}}(t) :=\wh C_{\mathrm{BUG}}(t) \bigtimes_{i=1}^d\wh\bfU_i$, and $\bar F(t) := F( \bar Y(t))$ and
      $\wh F(t) := F( \wh Y_{\mathrm{BUG}}(t))$. Then, we have 
    \begin{align*}
        &\Vert (\widehat C^1_{\mathrm{BUG}} - \widehat C^1) \bigtimes_{j=1}^d \bfU_j^{0,*}\widehat \bfU_j\Vert 
        \\
        &= \| \int_{t_0}^{t_1} \Bigl( \dot{\widehat C}_{\mathrm{BUG}}(t) \bigtimes_{j=1}^d \bfU_j^{0,*}\widehat \bfU_j -
        \dot C (t) \Bigr)dt \|
        \\
        &= \| \int_{t_0}^{t_1} \Bigl(  \wh F(t) \bigtimes_{j=1}^d \wh\bfU_j^{*} \bigtimes_{j=1}^d \bfU_j^{0,*}\widehat \bfU_j -
        \bar F(t)\bigtimes_{j=1}^d \bfU_j^{0,*} \Bigr)dt \|
        \\
        &\le  \int_{t_0}^{t_1}\Vert \wh F(t) \bigtimes_{j=1}^d \bfU_j^{0,*}\widehat \bfU_j\wh\bfU_j^{*}  - \bar F(t)\bigtimes_{j=1}^d \bfU_j^{0,*}\Vert\,dt
        \\
        &= \int_{t_0}^{t_1}\Vert (\wh F(t) - \bar F(t)) \bigtimes_{j=1}^d \bfU_j^{0,*}\Vert\,dt
        \\
        &\le L\int_{t_0}^{t_1}\Vert \wh Y_{\mathrm{BUG}}(t) - \bar Y(t)\Vert\,dt \leq 2LBh^2,
    \end{align*}
    since $\|Y_{\mathrm{BUG}}(t)-Y_0\|\le Bh$ and $\|\bar Y(t)-Y_0\|\le Bh$ for $t_0\le t \le t_1$ by the assumed bound of~$F$.
    \item Second, we bound the local error of the integrator in the blocks that belong to \eqref{eq:tildeC}:
    \begin{align*}
    	&\Vert (\widehat C^1_{\mathrm{BUG}} - \widehat C^1) \bigtimes_{j\neq i} \bfU_j^{0,*}\widehat \bfU_j\times_i \widetilde\bfU_i^{1,*}\widehat \bfU_i\Vert 
     \\
     &\leq \int_{t_0}^{t_1}\Vert (\wh F(t) - F(Y_0)) \bigtimes_{j\neq i} \bfU_j^{0,*} \times_i \widetilde\bfU_i^{1,*}\Vert\,dt
     \\
        &\leq L\int_{t_0}^{t_1}\Vert \wh Y_{\mathrm{BUG}}(t) - Y_0\Vert\,dt \leq LBh^2\,.
    \end{align*}
    \item All remaining terms in $\wh C^1$ are zero-valued. Hence, for any sets of indices $\mathcal{I}\subset \{1,\cdots,d\}$ with $|\mathcal{I}| \geq 2$ and $\mathcal{J} := \{1,\cdots,d\}\backslash \mathcal{I}$, we wish to bound the terms
        \begin{align*}
    	\ostar := \Vert \widehat C^1_{\mathrm{BUG}} \bigtimes_{j\in \mathcal{J}} \bfU_j^{0,*}\widehat \bfU_j\bigtimes_{i\in \mathcal{I}} \widetilde\bfU_i^{1,*}\widehat \bfU_i\Vert \leq\,& \int_{t_0}^{t_1}\Vert \wh F(t)\bigtimes_{j\in \mathcal{J}} \bfU_j^{0,*}\bigtimes_{i\in \mathcal{I}} \widetilde\bfU_i^{1,*}\Vert\,dt\,.
    \end{align*}
    Since the orthogonal projection onto the tangent space at $Y = C\bigtimes_{j=1}^d \bfU_j$ equals (see \cite{koch2010dynamical})
    $$
    P(Y)Z := Z \bigtimes_{j=1}^d \bfU_j\bfU_j^{*} + \sum_{i = 1}^d Z \bigtimes_{j\neq i} \bfU_j\bfU_j^{*} \times_i (\bfI - \bfU_i\bfU_i^{*})\,,
    $$
    we find that $P(Y^0)\wh F(t_0)\bigtimes_{i\in \mathcal{I}} \widetilde\bfU_i^{1,*} = 0$ and therefore 
    \begin{align*}
    	\ostar \leq\,& \int_{t_0}^{t_1}\Vert (\wh F(t) - P(Y^0)\wh F(t_0) )\bigtimes_{j\in \mathcal{J}} \bfU_j^{0,*}\bigtimes_{i\in \mathcal{I}} \widetilde\bfU_i^{1,*}\Vert\,dt \\
    	\leq\,& \int_{t_0}^{t_1}\Vert (\wh F(t) - \wh F(t_0) )\bigtimes_{j\in \mathcal{J}} \bfU_j^{0,*}\bigtimes_{i\in \mathcal{I}} \widetilde\bfU_i^{1,*}\Vert\,dt + h \varepsilon \leq ch^2 + h \varepsilon\,.
    \end{align*}
  \end{enumerate}
This yields 
$$
\Vert \wh Y^1 - \widehat Y_{\mathrm{BUG}}^1 \Vert \le \wh c h^2 + h\eps,
$$
and hence the local error is bounded by 
$$
\Vert Y^1 - A(t_1) \Vert \leq c_1 h^2 + c_2 h \varepsilon + c_3 h\delta + \vartheta.
$$
We then pass to the global error via Lady Windermere's fan \cite[Sections  I.7 and II.3]{HNW91}, which yields the stated error bound (with different constants $c_i$).
\end{proof}

\section{Parallel Tree Tensor Network integrator}\label{sec:TTN}
\subsection{Tree tensor networks}
In this subsection, we want to recap the tree tensor network (TTN) formalism from \cite{CLW2021,CLS2023}. TTNs are a hierarchical, data-sparse tensor format. Trees encode the hierarchical structure and are defined as follows.
\begin{definition}[{{\cite[Definition 2.1]{CLW2021} Ordered trees with unequal leaves}}]
	Let $\mathcal{L}$ be a given finite set, the elements of which are referred to as leaves.
	We  define the set $\mathcal{T}$ of trees  $\tau$ with the corresponding set of leaves $L(\tau)\subseteq \mathcal{L}$ recursively as follows:
	\begin{enumerate}
		\item[(i)] {\em Leaves are trees:} 
		$ \mathcal{L} \subset \mathcal{T}$,\ \text{ and }\  $L(l) := \{l\}$ for each $l \in \mathcal{L}$.
		\item[(ii)]  {\em Ordered $m$-tuples of trees with different leaves are again trees:} 
		If, for some $m\ge 2$,
		$$
		\tau_1, \dots, \tau_m \in \mathcal{T} 
		\quad \text{ with }\quad
		L(\tau_i ) \cap L(\tau_j ) = \emptyset \quad \forall i \neq j,
		$$
		 then their ordered $m$-tuple is in $\mathcal{T}$:		
		$$ \tau := (\tau_1, \dots, \tau_m) \in \mathcal{T} 
		, \quad \text{ and }\quad 
		L(\tau) := \dot{\bigcup}_{i=1}^m L(\tau_i) \ . 
		$$
	\end{enumerate}		
\end{definition}
The trees $\tau_1,\dots,\tau_m$ are called direct subtrees of a tree $\tau = (\tau_1, \dots, \tau_m)$. 
Together with direct subtrees of direct subtrees of $\tau$ etc. they are called the subtrees of $\tau$.
This admits a partial ordering for two trees $\sigma,\tau$ by setting
\begin{align*}
    & \sigma \le \tau \ \text{if and only if} \ \sigma \ \text{is a subtree of} \ \tau. \\
    &\sigma < \tau \ \text{if and only if} \ \sigma \ \text{is a subtree of} \ \tau \ \text{and} \ \sigma \neq \tau.
\end{align*}
Now fix a tree $\bar \tau$. With the $l$th leaf of $\bar \tau$ we associate a basis matrix $\bfU_l \in \C^{n_l \times r_l}$ of full rank, while with each tree $\tau = (\tau_1, \dots, \tau_m)$ we associate a connecting tensor $C_\tau$ of full multilinear rank $r= (r_\tau,r_{\tau_1},\dots,r_{\tau_m})$. We set $r_{\bar \tau} = 1$ at the root tensor. Tree tensor networks are now defined recursively from the root to the leaves.
\begin{definition}[{{\cite[Definition 2.2]{CLW2021} Tree tensor network}}]
	For a given tree $\bar \tau \in \mathcal{T}$ and basis matrices $\bfU_l$ and connecting tensors $C_\tau$ as described above, we recursively define a tensor $X_{\bar \tau}$ with a tree tensor network representation (or briefly a tree tensor network) as follows: 
	\begin{enumerate}
		\item[(i)]
		For each leaf  $\, l \in \mathcal{L}$, we set 
		$$X_l := \bfU_l^\top \in \C^{r_l \times n_l} \ .  $$
		\item[(ii)]
		For each subtree $\tau = ( \tau_1, \dots, \tau_m)$  (for some $m\ge 2$) of $\bar\tau $, 
		we set \\
		$n_\tau = \prod_{i=1}^m  n_{\tau_i}$ and $\bfI_\tau$ the identity matrix of dimension $r_\tau$, and
		\begin{align*}
		& X_\tau := C_\tau \times_0 \bfI_{\tau} \bigtimes_{i=1}^m \bfU_{\tau_i} 
		\in \mathbb{C}^{r_\tau \times n_{\tau_1} \times \dots \times n_{\tau_m}},
		\\
		& \bfU_{\tau} := \mat_0( X_{\tau} )^\top \in \mathbb{C}^{n_\tau \times r_\tau} \ .
		\end{align*}
		The subscript $0$ in $\times_0$ and $\mat_0( X_{\tau} )$ refers to the mode $0$ of dimension $r_{\tau}$ in $\C^{r_{\tau} \times r_{\tau_1}\times\dots\times r_{\tau_m}} $.
	\end{enumerate}
	The tree tensor network $X_{\bar{\tau}}$ (more precisely, its representation in terms of the matrices $\bfU_\tau$) is called {\em orthonormal} if for each subtree $\tau < \bar{\tau}$, the matrix $\bfU_\tau$ has orthonormal columns. 
\end{definition}
Binary tree tensor networks are studied in the mathematical literature as hierarchical Tucker tensors \cite{hackbusch2012tensor} and for general trees in \cite{Falco2018}. In the physical literature, the special class of matrix product states/tensor trains are widely used for applications in quantum physics \cite{Vidal2003,haegeman2016unifying,Schollwoeck2011}. In the chemical literature, tree tensor networks are used in the framework of the multilayer multiconfiguration time-dependent Hartree method (ML-MCTDH) \cite{Wang2003}. \\
\paragraph*{Constructing reduced operators and initial data for subtrees}
Suppose we have a function $F_{\bar \tau}$ which maps tree tensor networks to tree tensor networks and an initial TTN $Y_{\bar \tau}^0$. In the formulation of the algorithm, we will need a reduced version of the function and the initial data for each subtree. We will briefly explain how to construct $F_\tau$ and $Y_\tau^0$ for arbitrary subtrees $ \tau \leq \bar\tau$. For that we follow the construction from \cite{CLW2021}, where also a more detailed description can be found. The construction works recursively from the root to the leaf. \\
For a tree $\tau=(\tau_1,\dots,\tau_m) \leq \bar \tau$ we define the tensor space $\mathcal{V}_\tau=\C^{r_\tau \times n_{\tau_1} \times \dots \times n_{\tau_m}}$ and the manifold $\mathcal{M}_\tau = \mathcal{M}(\tau,(n_l)_{l\in L(\tau)},(r_\sigma)_{\sigma \leq \tau}) \subset \mathcal{V}_\tau$.  
Suppose that the function $F_\tau:\mathcal{V}_\tau \rightarrow \mathcal{V}_\tau$ and the initial data $Y_\tau^0\in\mathcal{M}_\tau$ are already constructed, and is of the form
\begin{align*}
    Y_\tau^0 = C_\tau^0 \times_0 \bfI_\tau \bigtimes_{i=1}^m \bfU_{\tau_i}^0.
\end{align*}
We now construct the reduction of $F_\tau$ and $Y_\tau^0$ to the subtrees $\tau_i$ ($i=1,\dots,m$).
We consider the QR decomposition $\mat_i(C_\tau)^\top = \bfQ_{\tau_i}\bfR_{\tau_i}$ and the matrix 
\begin{align*}
    \bfV_{\tau_i}^{0,*} := \mat_i \left( \ten_i(\bfQ_{\tau_i}^\top) \times_0 \bfI_\tau \bigtimes_{j \neq i} \bfU_{\tau_j}^0 \right).
\end{align*}
Following \cite{CLW2021}, we define the prolongation $\pi_{\tau,i}$ and restriction $\pi_{\tau,i}^{\dagger}$
\begin{align*}
    \pi_{\tau,i}(Y_{\tau_i}) &:= \ten_i ( \mat_0 (Y_{\tau_i})^\top \bfV_{\tau_i}^{0,*}) \in \mathcal{V}_\tau \quad \text{for } Y_{\tau_i} \in \mathcal{V}_{\tau_i} \\
	\pi_{\tau,i}^{\dagger} (Z_\tau) &:= \ten_0(\bfV_{\tau_i}^{0,\top} \mat_i(Z_\tau)^\top) \in \mathcal{V}_{\tau_i} \quad \text{for } Z_{\tau} \in \mathcal{V}_\tau.
\end{align*}
Altogether, we obtain the reduced function and initial data by 
\begin{align*}
    F_{\tau_i} &:= \pi_{\tau,i}^{\dagger} \circ F_\tau \circ \pi_{\tau,i} \\
    Y_{\tau_i}^0 &:= \pi_{\tau,i}^{\dagger}(Y_\tau^0).
\end{align*}
The computation of the prolongation and restriction can be efficiently done by a recursion, see \cite[Sec. 4.4]{CLW2021} for details.

\subsection{Parallel BUG integrator for tree tensor networks}
Consider a tree $\bar \tau = (\bar \tau_1,\dots,\bar \tau_m)$ and a corresponding tree tensor network
\begin{align*}
    Y_{\bar \tau}^0 = C_{\bar \tau}^0 \times_0 \bfI_{\bar \tau} \bigtimes_{i=1}^m \bfU_{\bar \tau_i}^0
\end{align*}
at time $t_0$. To evolve $Y_{\bar \tau}^0$ in time, one has to update all basis matrices $\bfU_l^0$, with $ l \in \mathcal{L}$, and all connecting tensors $C_\tau^0$, with $\tau \leq \bar \tau$. \emph{The algorithm presented here (Algorithm \ref{alg:Parallel_TTN_BUG}) evolves all basis matrices and connecting tensors fully in parallel.} A basis matrix is updated by the subflow $\Phi_l$ (Algorithm \ref{alg:Phi-tau-i}), a connecting tensor by the subflow $\Psi_\tau$ (Algorithm \ref{alg:Psi-tau}). The evolution of all basis matrices and connecting tensors is followed by an augmentation strategy (Algorithm \ref{alg:Augmentation}), which is the natural extension of the augmentation step for matrices \cite[sec. 3.1]{ceruti2023parallel} and Tucker tensors from above to tree tensor networks. Note that the augmentation process is a recursive procedure going from the leaves to the root. The augmentation does not involve any differential equations. The ranks after augmentation are (usually) doubled in each time step, i.e., $\wh r_\tau^1 = 2r_\tau^0$. Therefore, a rank truncation algorithm is applied after each time step; see \cite[Algorithm 7]{CLS2023} for details.

\bigskip
\begin{algorithm2e}[H]
	\caption{Parallel TTN BUG}
	\label{alg:Parallel_TTN_BUG}
	\SetAlgoLined
	\KwData{tree $\bar \tau = ({\bar \tau}_1,\dots ,{\bar \tau}_m)$, TTN $Y_{\bar \tau}^0 = C_{\bar \tau}^0 \times_0 \bfI_{\bar \tau} \bigtimes_{i=1}^m \bfU_{{\bar \tau}_i}^0$ in factorized form with tree ranks $(r_\tau^0)_{\tau \leq {\bar \tau}}$, functions $(F_{\tau})_{\tau \leq \bar \tau}$, $t_0,t_1$, truncation tolerance $\vartheta$}
	\KwResult{TTN $ Y_{\bar \tau}^1 = C_{\bar \tau}^1 \times_0 \bfI_{\bar \tau} \bigtimes_{i=1}^m \bfU_{{\bar \tau}_i}^1$ in factorized form with tree ranks $(r_\tau^1)_{\tau \leq \bar \tau}$}
	\Begin{
		 \For{$\tau \leq \bar \tau$ in parallel }{
            \uIf{$\tau =l$ is a leaf}{
                Set $\sigma = (\sigma_1,\dots,\sigma_m)$ such that $\sigma_i = l$ for an $i \in \{1,\dots,m\}$ 
                
                $\%$ \textit{Find the parent node in the tree.} 
                
                Set $\wh \bfU_\tau = \Phi_l(\sigma,l,C_\sigma^0,\bfU_l^0,F_l,t_0,t_1)$ 
                
                $\%$ \textit{Update the basis matrices, see Algorithm \ref{alg:Phi-tau-i}}
            }
            \Else{
                Set $\bar C_\tau^1 = \Psi_\tau (\tau,Y_\tau^0,F_\tau,t_0,t_1)$ 
                
                $\%$ \textit{Update the connecting tensors, see Algorithm \ref{alg:Psi-tau}}
            }
		}
        Set $\wh Y_{\bar\tau}^1 = \mathcal{A}_{\bar \tau} (\bar \tau,(\bar C_\tau^1)_{\tau \leq \bar \tau}, Y_{\bar\tau}^0, (\wh \bfU_l)_{l \in \mathcal{L}},(F_{\tau})_{\tau \leq \bar \tau},h)$ 
        
        $\%$ \textit{Augment the updated TTN, see Algorithm \ref{alg:Augmentation}} 
        
        Set $Y_{\bar\tau}^1 = \Theta (\wh Y_{\bar\tau}^1,\vartheta)$ 
        
        $\%$ \textit{Truncation with tolerance $\vartheta$, see \cite[Algorithm 7]{CLS2023}}
  }
\end{algorithm2e}

\begin{algorithm2e}[H]
	\caption{Subflow $\Phi_{l}$ (update a basis matrix)}
	\label{alg:Phi-tau-i}
	\SetAlgoLined
	\KwData{tree $\tau = (\tau_1,\dots ,\tau_m)$ with $\tau_i = l$, $C_\tau^0$ connecting tensor directly connected to $\bfU_l^0$, function $F_{l}(\cdot)$, $t_0,t_1$
	}
	\KwResult{ $\widehat{\bfU}_l = [\bfU_l^0,\widetilde{\bfU}_l^1 ] \in \C^{n_l \times 2r_l^0} $	}
	\Begin{
		compute a QR-decomposition $\mat_i({C_{\tau}^0})^\top = \bfQ_{l}^0\bfS_{l}^{0,\top}$; 
  
		set $\bfY_{l}^0 = \bfU_{l}^{0,\top} \times_0 \bfS_{l}^{0,\top} $ \\
		
			solve the $n_l \times r_l^0$ matrix differential equation
			\begin{align*}
				\dot{\bfY}_{l}(t) = F_{l}(\bfY_{l}(t)), \ \ \ \bfY_{l}(t_0) = \bfY_{l}^0\in \C^{r_l^0\times n_l}\,;
			\end{align*}
   
			compute $\wh{\bfU}_{l} \in \C^{n_l \times 2r_l^0}$ as an orthonormal basis of the range of the $n_l \times 2{r}_l^0$ matrix  $\bigl( \bfU_{l}^{0},\bfY_{l}(t_1)^\top \bigr) $ such  that the first $r_l^0$ columns of $\wh{\bfU}_{l}$ equal $\bfU_l^0$. $\wh \bfU_l$ is filled with zero columns if $\bigl( \bfU_{l}^{0},\bfY_{l}(t_1)^\top \bigr)$ has rank less than $2r$.  \\
		}
\end{algorithm2e}

\begin{algorithm2e}[H]
	\caption{Subflow $\Psi_\tau$ (update the connecting tensor)}
    \label{alg:Psi-tau}
	\SetAlgoLined
	\KwData{tree $\tau = (\tau_1,\dots ,\tau_m)$, connecting tensor $C_\tau^0\in \C^{r_\tau^0 \times r_{\tau_1}^0\times\dots\times r_{\tau_m}^0}$, basis matrices $\bfU_{\tau_i}^0$ in factorized form such that $Y_\tau^0 = C_\tau^0 \times_0 \bfI \bigtimes_{i=1}^m \bfU_{\tau_i}^0$,  function $F_\tau(\cdot)$, $t_0,t_1$}
	\KwResult{connecting tensor $ \bar{C}_\tau^1 \in \C^{r_\tau \times r_{\tau_1} \times \dots  \times r_{\tau_m}}$}
		\Begin{
		solve the $ r_\tau \times r_{\tau_1} \times \dots  \times r_{\tau_m}$ tensor differential equation from $t_0$ to $t_1$
		\begin{equation*}
			\dot{\bar{C}}_\tau(t) = F_\tau \bigl(\bar{C}_\tau(t) \bigtimes_{i=1}^{m} \bfU_{\tau_i}^0\bigr) \bigtimes_{i=1}^{m} \bfU_{\tau_i}^{0,*}, \quad \bar C_\tau(t_0) =  C_\tau^0\,;	
		\end{equation*}

	set  $\bar{C}_\tau^1 = \bar{C}_\tau(t_1)$	
	}
\end{algorithm2e}

\subsection{Augmentation step}
The augmentation step is given as\\
\begin{algorithm2e}[H]
	\caption{Augmentation $\mathcal{A}_\tau$}
    \label{alg:Augmentation}
	\SetAlgoLined
	\KwData{tree $\tau = (\tau_1,\dots ,\tau_m)$, connecting tensor $\bar{C}_\tau^1 \in \C^{r_\tau^0 \times r_{\tau_1}^0\times\dots\times r_{\tau_m}^0}$, connecting tensors $(\bar C_\sigma^1)_{\sigma < \tau}$, tree tensor network $Y_\tau^0$, basis matrices $\wh{\bfU}_{l}$ for $l \in \mathcal{L}$, functions $(F_{\sigma}(\cdot))_{\sigma \leq  \tau}$, time step size $h$}
	\KwResult{augmented TTN $\wh{Y}_\tau^1 = \wh C_\tau^1 \times_0 \bfI_{\tau} \bigtimes_{i=1}^m \wh \bfU_{\tau_i}$ in factorized form of tree rank $( 2 r_\sigma^0)_{\sigma \leq \tau}$ }
		\Begin{
        $\%$ Augmentation of subtrees \\
        \For{$i=1:m$ }{
            \uIf{$\tau_i \notin \mathcal{L}$, i.e. $\tau_i = (\sigma_1,\dots,\sigma_k)$}{
                $\wh Y_{\tau_i}^1$ = \textit{Augmentation}$(\tau_i,(\bar C_{\varsigma}^1)_{\varsigma \leq \tau_i}, Y_{\tau_i}^0,(\wh \bfU_{l})_{l \in \mathcal{L}},(F_{\varsigma}(\cdot))_{\varsigma \leq  \tau_i},h)$ 
                
                Set $\wh{C}_{\tau_i}^0 = C_{\tau_i}^0 \bigtimes_{j=1}^k \wh{\bfU}_{\sigma_j}^{*} \bfU_{\sigma_j}^0 $ 
                
                Compute an orthonormal basis $\wh{\bfQ}_{\tau_i}$ of the range of $(\mat_0(\wh{C}_{\tau_i}^0)^\top,\mat_0(\wh{C}_{\tau_i}^1)^\top)$ such that the first $r_{\tau_i}^0$ columns of $\wh{\bfQ}_{\tau_i}$ equal $\mat_0(\wh{C}_{\tau_i}^0)^\top$. $\wh{\bfQ}_{\tau_i}$ is filled
                with zero columns if  the matrix $(\mat_0(\wh{C}_{\tau_i}^0)^\top,\mat_0(\wh{C}_{\tau_i}^1)^\top)$ has rank less than $2r_{\tau_i}^0$. 
                
                Set $\wh{\bfU}_{\tau_i} = \mat_0(\wh{X}_{\tau_i}^1)^\top$, where the orthonormal TTN $\wh{X}_{\tau_i}^1$ is obtained from $\wh{Y}_{\tau_i}^1$ by replacing the connecting tensor with $\wh{C}_{\tau_i}^1 = \ten_0(\wh{\bfQ}_{\tau_i}^\top)$; 
                
                $\%$ Construction of $\wt \bfU_{\tau_i}^1$ 
                
                
                Set $\bfM_{\tau_i}$ as the last $ r_{\tau_i}^0 $ columns of $\wh{\bfQ}_{\tau_i}$;
                
                Set $\wt \bfU_{\tau_i}^1 = \mat_0 (\wt X_{\tau_i}^1)^\top$, where the TTN $\wt X_{\tau_i}^1$ consists of the connecting tensor $\ten_0(\bfM_{\tau_i}^\top)$ and $\wt \bfU_{\sigma_j}^1 = \wh \bfU_{\sigma_j}$, for $j=1,\dots,k$; 
            }
            Compute the tensor $\widetilde C_{\tau_i}^1 = hF(Y_\tau^0)\bigtimes_{j\neq i} \bfU_{\tau_j}^{0,*} \times_i \widetilde\bfU_{\tau_i}^{1,*}$;
        }
        
        $\%$ Augmentation of connecting tensor
        
        Construct the augmented connecting tensor $\widehat C_\tau^1 \in\C^{r_\tau \times 2r_{\tau_1}\times\cdots\times 2r_{\tau_m}}$ such that
        \begin{align}
    		&\widehat C_\tau^1 \bigtimes_{i=1}^m (\bfI_{r_{\tau_i}}, \bm{0}_{r_{\tau_i}}) = \bar{C}_\tau^1\, \label{eq:barC_TTN_case}\\
    		&\widehat C_\tau^1 \bigtimes_{j\neq i} (\bfI_{r_{\tau_j}}, \bm{0}_{r_{\tau_j}}) \times_i ( \bm{0}_{r_{\tau_i}}, \bfI_{r_{\tau_i}}) = \widetilde C_{\tau_i}^1 \quad \text{for } i = 1,\dots,m\, .\label{eq:tildeC_TTN_case}
		\end{align} 
	}
\end{algorithm2e}
\bigskip
We try to give more intuition about the augmentation step. In the subflow $\mathcal{A}_\tau$, each connecting tensor $\bar C_\tau^1$ is sequentially augmented in the $i$th mode by a block $\wt C_{\tau_i}^1$. This procedure is graphically illustrated for a connecting tensor of a binary tree, i.e. an order three tensor, in the left part of figure \ref{fig:connecting_tensor_augment}. The augmentation in the $0$-dimension happens after the recursion in the augmentation of the subtrees. The output of the recursion at the top node is a connecting tensor, which is augmented in all dimensions except the $0$-dimension. It is then augmented in the $0$-dimension, see the right part of figure \ref{fig:connecting_tensor_augment}. All remaining blocks are set to zero, which is why the parallel TTN BUG gives a rougher approximation than the rank-adaptive TTN BUG, where all entries of the augmented connecting tensor are (usually) non-zero. See the numerical example section for a more detailed comparison.\\

Analogous to the Tucker case, it holds  $(\bfI_{r_{\tau_i}^0}, \bm{0}_{r_{\tau_i}^0})= \bfU_{\tau_i}^{0, *}\wh \bfU_{\tau_i}$ and 
$( \bm{0}_{r_{\tau_i}^0}, \bfI_{r_{\tau_i}^0})= \widetilde \bfU_{\tau_i}^{1, *}\wh \bfU_{\tau_i}$. Further, filling up with zero columns is not necessary for an actual implementation, but makes the presentation simpler. We remark that without the zero columns, the matrices $\wt \bfU_{\tau_i}^1$ can be empty. Then the connecting tensor is not augmented in the corresponding mode. The augmentation of the connecting tensor can also be expressed in a sequential way by matricizations and tensorizations. For a $\tau = (\tau_1,\dots,\tau_m)$, suppose we already have computed all $\wt \bfU_{\tau_i}^1$ and $\widetilde{C}_{\tau_i}^1$, $i=1,\dots,m$. Then the augmentation can be implemented by

\begin{algorithm2e}[H]
    \SetAlgoLined
		\Begin{
        $\%$ Augmentation of connecting tensor
  
        Set $\widehat{C}_\tau^1 = \bar{C}_\tau^1$ 
        
		\For{$i=1:m$ {\rm }}{
                
                Set $\widehat{C}_\tau^1 = \ten_i \begin{pmatrix}
                \mat_i(\widehat{C}_\tau^1) \\
                \mat_i(\widetilde{C}_{\tau_i}^1)
               \end{pmatrix} $ 
            }
        }
\end{algorithm2e}

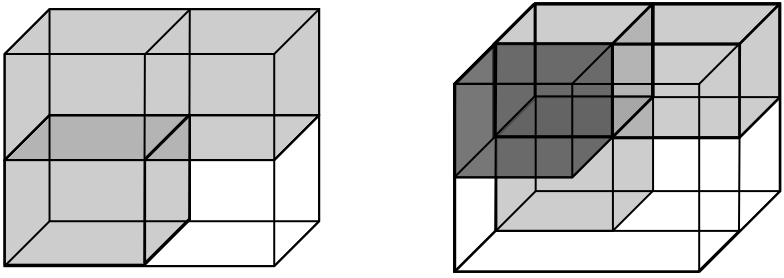
\begin{figure}
    \centering
    \tikzset{every picture/.style={line width=0.75pt}} 
    \begin{minipage}{0.45\textwidth}
    
    \begin{tikzpicture}[x=0.75pt,y=0.75pt,yscale=-1,xscale=1,scale=0.55]

\draw  [fill={rgb, 255:red, 155; green, 155; blue, 155 }  ,fill opacity=0.5 ] (238.38,53.62) -- (279.57,12.44) -- (526.8,12.44) -- (526.8,109.77) -- (485.62,150.95) -- (238.38,150.95) -- cycle ; \draw   (526.8,12.44) -- (485.62,53.62) -- (238.38,53.62) ; \draw   (485.62,53.62) -- (485.62,150.95) ;
\draw  [fill={rgb, 255:red, 155; green, 155; blue, 155 }  ,fill opacity=0.5 ] (238.38,150.92) -- (279.54,109.77) -- (408.38,109.77) -- (408.38,205.79) -- (367.23,246.95) -- (238.38,246.95) -- cycle ; \draw   (408.38,109.77) -- (367.23,150.92) -- (238.38,150.92) ; \draw   (367.23,150.92) -- (367.23,246.95) ;
\draw   (526.8,109.77) -- (485.62,150.95) -- (238.38,150.95) -- (238.38,53.62) -- (279.57,12.44) -- (526.8,12.44) -- cycle ; \draw   (238.38,150.95) -- (279.57,109.77) -- (526.8,109.77) ; \draw   (279.57,109.77) -- (279.57,12.44) ;
\draw   (526.8,207.09) -- (485.62,248.28) -- (238.38,248.28) -- (238.38,150.95) -- (279.57,109.77) -- (526.8,109.77) -- cycle ; \draw   (238.38,248.28) -- (279.57,207.09) -- (526.8,207.09) ; \draw   (279.57,207.09) -- (279.57,109.77) ;
\draw   (238.38,150.95) -- (279.57,109.77) -- (526.8,109.77) -- (526.8,207.09) -- (485.62,248.28) -- (238.38,248.28) -- cycle ; \draw   (526.8,109.77) -- (485.62,150.95) -- (238.38,150.95) ; \draw   (485.62,150.95) -- (485.62,248.28) ;
\draw    (367,54) -- (366.38,245.95) ;
\draw    (407.57,204.77) -- (366.38,245.95) ;
\draw    (407.88,108.79) -- (366.69,149.97) ;
\draw    (408.18,12.82) -- (367,54) ;
\draw    (408.18,12.82) -- (407.57,204.77) ;

\end{tikzpicture}
    \end{minipage}
    \begin{minipage}{0.45\textwidth}
        \begin{tikzpicture}[x=0.75pt,y=0.75pt,yscale=-1,xscale=1,scale=0.5]

\draw  [fill={rgb, 255:red, 155; green, 155; blue, 155 }  ,fill opacity=0.5 ] (353.26,55.7) -- (393.76,15.2) -- (522.26,15.2) -- (522.26,109.7) -- (481.76,150.2) -- (353.26,150.2) -- cycle ; \draw   (522.26,15.2) -- (481.76,55.7) -- (353.26,55.7) ; \draw   (481.76,55.7) -- (481.76,150.2) ;
\draw  [fill={rgb, 255:red, 155; green, 155; blue, 155 }  ,fill opacity=0.5 ] (236.46,150.2) -- (276.96,109.7) -- (394.5,109.7) -- (394.5,204.2) -- (354,244.7) -- (236.46,244.7) -- cycle ; \draw   (394.5,109.7) -- (354,150.2) -- (236.46,150.2) ; \draw   (354,150.2) -- (354,244.7) ;
\draw  [fill={rgb, 255:red, 155; green, 155; blue, 155 }  ,fill opacity=0.5 ] (233.19,56.46) -- (273.36,16.28) -- (393.44,16.28) -- (393.44,110.03) -- (353.26,150.2) -- (233.19,150.2) -- cycle ; \draw   (393.44,16.28) -- (353.26,56.46) -- (233.19,56.46) ; \draw   (353.26,56.46) -- (353.26,150.2) ;
\draw   (234.5,55.7) -- (275,15.2) -- (394.5,15.2) -- (394.5,109.7) -- (354,150.2) -- (234.5,150.2) -- cycle ; \draw   (394.5,15.2) -- (354,55.7) -- (234.5,55.7) ; \draw   (354,55.7) -- (354,150.2) ;
\draw  [fill={rgb, 255:red, 74; green, 74; blue, 74 }  ,fill opacity=0.75 ] (194,96.2) -- (234.5,55.7) -- (353.5,55.7) -- (353.5,150.2) -- (313,190.7) -- (194,190.7) -- cycle ; \draw   (353.5,55.7) -- (313,96.2) -- (194,96.2) ; \draw   (313,96.2) -- (313,190.7) ;
\draw   (522.26,109.7) -- (481.76,150.2) -- (353.26,150.2) -- (353.26,55.7) -- (393.76,15.2) -- (522.26,15.2) -- cycle ; \draw   (353.26,150.2) -- (393.76,109.7) -- (522.26,109.7) ; \draw   (393.76,109.7) -- (393.76,15.2) ;
\draw   (522.93,204.59) -- (441.76,285.76) -- (194.93,285.76) -- (194.93,96.37) -- (276.1,15.2) -- (522.93,15.2) -- cycle ; \draw   (194.93,285.76) -- (276.1,204.59) -- (522.93,204.59) ; \draw   (276.1,204.59) -- (276.1,15.2) ;
\draw   (353.5,150.2) -- (313,190.7) -- (194,190.7) -- (194,96.2) -- (234.5,55.7) -- (353.5,55.7) -- cycle ; \draw   (194,190.7) -- (234.5,150.2) -- (353.5,150.2) ; \draw   (234.5,150.2) -- (234.5,55.7) ;
\draw   (193.76,96.44) -- (275,15.2) -- (522.26,15.2) -- (522.26,204.76) -- (441.02,286) -- (193.76,286) -- cycle ; \draw   (522.26,15.2) -- (441.02,96.44) -- (193.76,96.44) ; \draw   (441.02,96.44) -- (441.02,286) ;
\draw   (352.94,56.78) -- (393.44,16.28) -- (521.94,16.28) -- (521.94,110.78) -- (481.44,151.28) -- (352.94,151.28) -- cycle ; \draw   (521.94,16.28) -- (481.44,56.78) -- (352.94,56.78) ; \draw   (481.44,56.78) -- (481.44,151.28) ;
\draw   (395.07,108.94) -- (354.9,149.12) -- (234.82,149.12) -- (234.82,55.38) -- (275,15.2) -- (395.07,15.2) -- cycle ; \draw   (234.82,149.12) -- (275,108.94) -- (395.07,108.94) ; \draw   (275,108.94) -- (275,15.2) ;
\draw   (481.17,244.75) -- (439.67,286) -- (193.75,286) -- (194.87,96.95) -- (236.37,55.7) -- (482.29,55.7) -- cycle ; \draw   (193.75,286) -- (235.24,244.75) -- (481.17,244.75) ; \draw   (235.24,244.75) -- (236.37,55.7) ;

\end{tikzpicture}
\end{minipage}

\caption{Augmentation of an order three tensor. Left: Illustration of the augmentation of a connecting tensor $\bar C_\tau^1$ (light grey left top) with $\wt C_{\tau_1}^1$ in the first dimension (light grey left down) and with $\wt C_{\tau_2}^1$ in the second dimension (light grey right top). Right: Illustration of the augmentation of the connecting tensor from the left in the $0$-dimension (dark grey block). All remaining blocks are set to zero.}
    \label{fig:connecting_tensor_augment}
    
\end{figure}

\subsection{Rank Truncation}\label{subsec:truncation}
The augmented TTN $\wh{X}_{\bar{\tau}}$ usually has doubled ranks in every mode. To keep the computation feasible one has to truncate the ranks of the TTN according to a tolerance $\vartheta$ after each time step. Truncation procedures are well understood such that we omit the detailed description here. A rank truncation algorithm for TTNs can be found in ~\cite[Algorithm 7]{CLS2023}. The error of the rank truncation is under control as the following error bound holds: 
\begin{theorem}[{{\cite[Theorem A.1]{CLS2023} Rank truncation error}}]
\label{thm:err-trunc}
The error of the tree tensor network~$X_{\bar\tau}$, which results from rank truncation of $\wh X_{\bar\tau}$ with tolerance $\vartheta$ according to Algorithm 7 from~\cite{CLS2023}, is bounded by 
$$
\| X_{\bar\tau}-\wh X_{\bar\tau} \| \le c_{\bar\tau}\, \vartheta
\ \ \text{ with }\ 
c_{\bar\tau} =\| C_{\bar\tau} \| (d_{\bar\tau}-1) +1,
$$
where $d_{\bar\tau}$ is the number of nodes in $\bar{\tau}$.
\end{theorem}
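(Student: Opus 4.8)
The plan is to prove the bound by induction on the tree $\bar\tau$, following the recursive sweep of Algorithm~7 in \cite{CLS2023}. Recall that this rounding procedure first re-orthonormalizes $\wh X_{\bar\tau}$ from the leaves to the root --- an exact operation that introduces no error, leaves all basis matrices $\wh\bfU_\sigma$ ($\sigma<\bar\tau$) with orthonormal columns, and preserves $\|\wh X_{\bar\tau}\|=\|\wh C_{\bar\tau}\|=:\|C_{\bar\tau}\|$ --- and then sweeps from the root to the leaves, at each edge $(\pi(\tau),\tau)$ forming an SVD of the mode-$\tau$ matricization of the connecting tensor currently sitting at the parent $\pi(\tau)$ and discarding the trailing singular vectors so that the discarded tail has Euclidean norm at most $\vartheta$. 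I would isolate the local estimate at the root and then invoke the inductive hypothesis on each subtree $\bar\tau_i$.

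For the local estimate, write $X_{\bar\tau}=C_{\bar\tau}\times_0\bfI_{\bar\tau}\bigtimes_{i=1}^m\bfU_{\bar\tau_i}$ (after the orthonormalization phase) and let $X_{\bar\tau}^{\mathrm{tr}}=C_{\bar\tau}^{\mathrm{tr}}\times_0\bfI_{\bar\tau}\bigtimes_{i=1}^m\bfU_{\bar\tau_i}^{\mathrm{tr}}$ be the fully rounded network. Insert the intermediate network $C_{\bar\tau}^{\mathrm{tr}}\times_0\bfI_{\bar\tau}\bigtimes_{i=1}^m\bfU_{\bar\tau_i}$, in which only the root core has been rounded while the bases are still the original orthonormal ones, and apply the triangle inequality. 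The first contribution, $\|X_{\bar\tau}-C_{\bar\tau}^{\mathrm{tr}}\times_0\bfI_{\bar\tau}\bigtimes_i\bfU_{\bar\tau_i}\|$, is the error of rounding the root core in its nontrivial modes; since the $\bfU_{\bar\tau_i}$ are isometries acting on disjoint index groups, rounding one mode at a time and telescoping over orthogonal projections bounds it directly in terms of the discarded singular-value tails, and sitting at the top of the network it picks up no amplification (this yields the ``$+1$'' in $c_{\bar\tau}$, attached to the root connecting tensor). The second contribution, $\|C_{\bar\tau}^{\mathrm{tr}}\times_0\bfI_{\bar\tau}\bigtimes_i\bfU_{\bar\tau_i}-C_{\bar\tau}^{\mathrm{tr}}\times_0\bfI_{\bar\tau}\bigtimes_i\bfU_{\bar\tau_i}^{\mathrm{tr}}\|$, splits by a further telescoping over $i$, using the isometry of the remaining factors, into $\|C_{\bar\tau}^{\mathrm{tr}}\|$ times the sum of the rounding errors of the subtrees rooted at $\bar\tau_i$, and $\|C_{\bar\tau}^{\mathrm{tr}}\|\le\|C_{\bar\tau}\|$ because rounding is a composition of orthogonal projections.

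To close the induction I would bound the rounding error of each $\bar\tau_i$ by the inductive hypothesis applied to that subtree, where the singular values retained from the root SVD are folded into the connecting tensor of $\bar\tau_i$ so that the subtree keeps the orthonormal-basis structure and its connecting tensor norm stays at most $\|C_{\bar\tau}\|$. Accounting for the $d_{\bar\tau}-1$ edges of $\bar\tau$ via $\sum_{i=1}^m d_{\bar\tau_i}=d_{\bar\tau}-1$, and collecting the root term, gives $\|X_{\bar\tau}^{\mathrm{tr}}-X_{\bar\tau}\|\le\big(\|C_{\bar\tau}\|(d_{\bar\tau}-1)+1\big)\vartheta$; the base case of a single leaf is trivial, since no rounding is performed. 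An alternative, slightly less hands-on route is to write $X_{\bar\tau}^{\mathrm{tr}}=\mathcal{P}(X_{\bar\tau})$ as a composition of orthogonal projections, one per non-root node, and bound $\|X_{\bar\tau}-\mathcal{P}(X_{\bar\tau})\|$ by the usual telescoping sum of the individual projection errors; the same amplification analysis is then needed to keep each term at $\|C_{\bar\tau}\|\vartheta$.

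The main obstacle is exactly this amplification bookkeeping: one must verify that the weight accumulated along a root-to-leaf path does not grow into a \emph{product} of the norms of all connecting tensors on that path, but collapses to the single factor $\|C_{\bar\tau}\|$. This is where the specific design of Algorithm~7 matters --- after rounding an edge, the retained singular values are carried into the child's connecting tensor before its subtree is processed, which keeps every basis matrix encountered an isometry and every connecting tensor encountered of norm at most $\|C_{\bar\tau}\|$. Granted this invariant, the remainder of the argument is only repeated use of the triangle inequality and of the orthogonality of the truncation projections.
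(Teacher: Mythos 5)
First, note that this paper does not prove the statement at all: Theorem~\ref{thm:err-trunc} is imported verbatim from \cite[Theorem A.1]{CLS2023}, so the only proof to compare against is the appendix proof there, which is indeed an induction over the tree exploiting orthonormality of the bases --- the same general spirit as your sketch. However, your argument rests on a guessed specification of Algorithm~7 (leaf-to-root orthonormalization, then a root-to-leaf sweep with an SVD per edge whose discarded tail is at most $\vartheta$, retained singular values pushed into the child); the constant in the theorem is tied to exactly these algorithmic choices (where re-orthonormalization happens and in which basis each tail is measured), so the proof cannot be considered complete without pinning them down rather than postulating them.

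More importantly, your bookkeeping does not deliver the stated constant $c_{\bar\tau}=\|C_{\bar\tau}\|(d_{\bar\tau}-1)+1$, and the gap sits precisely where you yourself flag ``the main obstacle.'' (i) You attribute the ``$+1$'' to rounding the root connecting tensor in \emph{all} $m$ of its nontrivial modes; even with orthogonality of the successive projections this contributes up to $\sqrt{m}\,\vartheta$ (and $m\vartheta$ by plain triangle inequality), not $\vartheta$. (ii) Your second triangle-inequality term multiplies the subtree rounding errors by $\|C^{\mathrm{tr}}_{\bar\tau}\|\le\|C_{\bar\tau}\|$, while the inductive hypothesis you invoke for each subtree (whose connecting-tensor norm you can only bound by $\|C_{\bar\tau}\|$ after folding in the retained singular values) already carries a factor $\|C_{\bar\tau}\|(d_{\bar\tau_i}-1)$. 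Combining the two gives terms of size $\|C_{\bar\tau}\|^2(d_{\bar\tau_i}-1)\vartheta$, i.e.\ a constant that compounds core norms along the recursion; a short check with $\|C_{\bar\tau}\|>1$ and any non-leaf child shows the resulting bound exceeds $\bigl(\|C_{\bar\tau}\|(d_{\bar\tau}-1)+1\bigr)\vartheta$. You must choose one formulation consistently: either the inductive quantity is the error of the \emph{weighted} subtree (singular values folded in), in which case it enters the global error isometrically and must \emph{not} be multiplied by $\|C^{\mathrm{tr}}_{\bar\tau}\|$ again, or it is the error of the unweighted subtree, in which case the amplification factor is the environment's norm and the inductive constant must be set up differently. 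Asserting that the invariant ``collapses'' the product to a single factor $\|C_{\bar\tau}\|$ is exactly the claim that needs to be proved, and as written your decomposition does not prove it.
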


\subsection{Comparison to rank-adaptive BUG}
As already noted in the previous section, the parallel BUG reads very similar to the rank-adaptive BUG from~\cite{ceruti2022rank,CLS2023}. However, there are important differences between these two methods. While the differential equations for the leaves $\Phi_l$ are the same, the Galerkin steps $\Psi_\tau$ for the connecting tensor updates are performed in different bases. The parallel BUG uses the basis spanned by all $\bfU_{\tau_i}^0$, while the rank-adaptive BUG uses the augmented basis spanned by all $\wh \bfU_{\tau_i}$.
Note that the ODE for the parallel BUG integrator is of dimension $r\times \cdots \times r$, the ODE for the rank-adaptive BUG integrator is $2r \times \cdots \times 2r$. As we are solely using the old basis in the parallel integrator, all ODEs can be solved fully in parallel. In the rank-adaptive BUG only ODEs on the same level in the tree allow for parallelization. The parallel BUG integrator is less accurate but better suited for large-scale high-performance computations.  

To obtain rank adaptivity with the parallel BUG, we must introduce the augmentation strategy presented in Algorithm~\ref{alg:Augmentation}. The rank-adaptive BUG augments itself on the fly, so no additional augmentation step is needed. However, the recursive augmentation in the $0$-dimension, i.e. computing an orthonormal basis of the range of $(\mat_0(\wh{C}_{\tau_i}^0)^\top,\mat_0(\wh{C}_{\tau_i}^1)^\top)$, is performed for both integrators in the same way. 

\subsection{A fully parallel step rejection strategy for binary trees}
When the solution to an ODE requires a sharp increase of the ranks, both the parallel BUG and the rank-adaptive BUG, fail to capture the dynamics, since they can only double the rank. Therefore, a step rejection strategy must be introduced to allow for arbitrary increases of the ranks at one time step. In this subsection we extend the step-rejection strategy from \cite[section 3.3]{ceruti2023parallel} to binary tree tensor networks.\\
Suppose we have a TTN $Y_{\bar \tau}^0$ with ranks $(r_\tau^0)_{\tau \leq \bar \tau}$ at time $t_0$ and the TTN $\wh Y_{\bar \tau}^1$ with ranks $(\wh r_\tau^1)_{\tau \leq \bar \tau}$ at time $t_1$. For each subtree $\tau \leq \bar \tau$ check the following two conditions: 
\begin{itemize}
    \item[1)] If $\wh r_\tau^1 = 2r_\tau^0$ for some $\tau < \bar \tau$, then the step is rejected. 
    \item[2)] If for some $\tau \leq \bar \tau$ the condition $h \eta_\tau > c \vartheta$ is satisfied (e.g. $c=10$), where  $\eta_\tau = \vert \vert F_\tau(Y_\tau^0) \times_1 \wt \bfU_{\tau_1}^{1,*} \times_2 \wt \bfU_{\tau_2}^{1,*}  \vert \vert $, then the step is rejected. Note that all $\eta_\tau$ can be computed fully in parallel.
\end{itemize}
If a step is rejected, repeat the step with the augmented basis $\wh \bfU_i, i=1,\dots,d$ and augmented connecting tensors $C_\tau^{\text{aug}}$, for $\tau \leq \bar \tau$, where $C_\tau^{\text{aug}}$ equals $C_\tau^0$ augmented with zeros such that its dimensions equal $(\wh r_\tau,\wh r_{\tau_1},\dots,\wh r_{\tau_m})$. Note that this strategy equally applies to the rank-adaptive BUG integrator from \cite{CLS2023}. 
\begin{remark}
    The same strategy applies for non-binary trees, but the number of step rejection constraints per node, i.e., the number of possible combinations of products with two or more $\wt \bfU_{\tau_i}^{1,*}$ factors and $\bfU_{\tau_i}^{0,*}$ as the remaining factors, scales geometrically as $2^{m-1}$.
\end{remark}
\subsection{Robust error bound}
The error bound for Tucker tensors from Theorem \ref{thm:error_bound_tucker} extends to the parallel TTN BUG integrator. Similar to \cite[sec. 5.2]{CLS2023}, a complication arises that we are working on different manifolds at every time step, as the proposed method is rank-adaptive. For a tree $\tau=(\tau_1,\dots,\tau_m)$ we recall the notation $\mathcal{V}_\tau := \C^{r_\tau \times n_{\tau_1} \times \dots \times n_{\tau_m}}$ for the tensor space. The TTN manifold at time step $t_k$ is denoted by $\mathcal{M}_\tau^k := \mathcal{M}_\tau \left((n_\tau)_{\sigma \leq \tau},(r_\sigma^k)_{\sigma \leq  \tau}\right)$. Following the error analysis for the parallel Tucker integrator of Section \ref{sec:parallelTucker}, we make the three assumptions (see also in \cite{CLS2023,CLW2021})
\begin{enumerate}
    \item $F: \mathcal{V}_{\bar \tau} \rightarrow \mathcal{V}_{\bar \tau}$ is Lipschitz continuous and bounded, i.e.,
    \begin{align*}
        \norm{F(Y) - F(\wt Y)} &\leq L\, \norm{Y - \wt Y}  &&\text{for all} \ Y, \wt Y \in \mathcal{V}_{\bar \tau} \\
        \norm{F(Y)} &\leq B \quad &&\text{for all} \ Y \in \mathcal{V}_{\bar \tau}.
    \end{align*}
    \item For $Y\in\mathcal{M}_\tau^k$ near the exact solution $A(t)$ for $t\in [t_k,t_{k+1}]$ we assume, with the
    orthogonal projection $P^k(Y)$ onto the tangent space $\mathcal{T}_Y \mathcal{M}_{\bar \tau}^k$ and the normal projection $P^k(Y)^\perp=I-P^k(Y)$,
    \begin{align*}
        \norm{P^k(Y)^\perp F(Y) } \leq \eps.
    \end{align*}
    \item The error at the initial condition is bounded by $\norm{Y_0 - A(0)} \leq \delta.$
\end{enumerate}
\begin{theorem}\label{thm:error_bound_TTN}
	Under assumptions 1. to 3., the error of the parallel Tucker integrator is bounded by
		\begin{align*}
		\Vert Y_k - A(t_k) \Vert  \le c_1 h + c_2 \varepsilon + c_3 \delta + c_4 k\vartheta,
  \qquad t_0 \le t_k \le T,
		\end{align*}
		where all $c_i$ only depend on the bound and Lipschitz constant of $F$ and on $T$. In particular, the $c_i$ are independent of the singular values of matricizations of connecting tensors.
\end{theorem}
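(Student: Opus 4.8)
The plan is to mimic the proof of Theorem~\ref{thm:error_bound_tucker}, but carry out the bookkeeping recursively over the tree. The key observation, already stated in the paper, is that before truncation the parallel TTN BUG and the rank-adaptive TTN BUG of~\cite{CLS2023} produce TTNs $\wh Y_{\bar\tau}^1 = \wh C_{\bar\tau}^1 \times_0 \bfI_{\bar\tau}\bigtimes_{i}\wh\bfU_{\bar\tau_i}$ and $\wh Y_{\mathrm{BUG},\bar\tau}^1 = \wh C_{\mathrm{BUG},\bar\tau}^1 \times_0 \bfI_{\bar\tau}\bigtimes_{i}\wh\bfU_{\bar\tau_i}$ that differ \emph{only in their connecting tensors at the inner nodes} (the augmented leaf bases $\wh\bfU_l$ and the basis augmentation in the $0$-direction are computed identically by both methods, as noted in the comparison subsection). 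Since $\|X_{\bar\tau}-A(t_1)\|\le \|X_{\bar\tau}-\wh X_{\bar\tau}\|+\|\wh X_{\bar\tau}-\wh X_{\mathrm{BUG},\bar\tau}\|+\|\wh X_{\mathrm{BUG},\bar\tau}-A(t_1)\|$, and the first term is $O(\vartheta)$ by Theorem~\ref{thm:err-trunc} while the last is $O(h^2+h\eps+h\delta)$ by the local error bound for the rank-adaptive TTN BUG in~\cite{CLS2023}, it remains to bound the middle term $\|\wh X_{\bar\tau}-\wh X_{\mathrm{BUG},\bar\tau}\|$ by $\wh c\, h^2 + h\eps$. Because both TTNs share all orthonormal factors, this reduces to bounding $\|\wh C_\tau^1 - \wh C_{\mathrm{BUG},\tau}^1\|$ at every inner node $\tau$, summed over the tree (orthonormality of the surrounding basis matrices makes the TTN norm the $\ell^2$-sum of the connecting-tensor differences, up to the embeddings).

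Next I would carry out the per-node estimate exactly as in the Tucker proof, decomposing $\wh C_\tau^1 - \wh C_{\mathrm{BUG},\tau}^1$ into the three block types dictated by the augmentation construction~\eqref{eq:barC_TTN_case}--\eqref{eq:tildeC_TTN_case}. First, the ``$\bar C$'' block, obtained by contracting against $\bfU_{\tau_i}^{0,*}\wh\bfU_{\tau_i}$ in every mode: here one writes both connecting tensors as $C_\tau^0$ plus the time integral of their defining ODEs (the $\Psi_\tau$ equation for the parallel one, the rank-$2r$ Galerkin equation for the rank-adaptive one), projects the rank-adaptive flow down to the old basis so the two integrands become $F$ evaluated at two TTNs each within $Bh$ of $Y_\tau^0$, and uses Lipschitz continuity of $F$ (via the reduced $F_\tau$, whose Lipschitz and boundedness constants are inherited through the prolongation/restriction $\pi_{\tau,i},\pi_{\tau,i}^\dagger$, which are norm-preserving isometries) to get an $O(Lh\cdot Bh)=O(h^2)$ bound. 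Second, the ``$\wt C_{\tau_i}$'' blocks: these are identical by construction (both integrators set mode-$i$ augmentation to $hF(Y_\tau^0)\bigtimes_{j\ne i}\bfU_{\tau_j}^{0,*}\times_i\wt\bfU_{\tau_i}^{1,*}$), so the difference comes only from the rank-adaptive flow deviating from the frozen value $F(Y_\tau^0)$ over $[t_0,t_1]$, again $O(h^2)$. Third, the blocks that the parallel method zeros out but the rank-adaptive method does not (index sets $\mathcal{I}$ with $|\mathcal{I}|\ge 2$): here I invoke the tangent-space projection formula for TTNs (the analogue of the Tucker formula, e.g.\ from~\cite{CLW2021}) to show that $P(Y_\tau^0)\wh F(t_0)$ contracted against two or more $\wt\bfU^{1,*}$ factors vanishes, so the block equals the integral of $(\wh F(t)-P(Y_\tau^0)\wh F(t_0))$ contracted suitably, which splits into an $O(h^2)$ time-increment term plus the $h\eps$ normal-component term from assumption~2.

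Summing the three block estimates over the finitely many inner nodes of $\bar\tau$ gives $\|\wh X_{\bar\tau}^1 - \wh X_{\mathrm{BUG},\bar\tau}^1\| \le \wh c\, h^2 + \wh c'\, h\eps$ with constants depending only on $L$, $B$, $T$, and the (fixed) tree, hence a local error bound $\|Y_{\bar\tau}^1 - A(t_1)\| \le c_1 h^2 + c_2 h\eps + c_3 h\delta + c_4\vartheta$. Finally I would pass from local to global error by the standard Lady Windermere's fan argument~\cite[Sections~I.7 and~II.3]{HNW91}, with the extra care that the manifold $\mathcal{M}_\tau^k$ changes from step to step: this is handled as in~\cite[sec.~5.2]{CLS2023} by noting that the exact solution's orthogonal projection error onto each $\mathcal{M}^k$ is controlled by the same $\eps$ via assumption~2, and Lipschitz stability of the combined map (exact flow followed by best approximation) propagates the local errors additively, producing the stated $c_1 h + c_2\eps + c_3\delta + c_4 k\vartheta$ with renamed constants. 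The main obstacle is the recursive bookkeeping: making precise that the TTN norm of $\wh X_{\bar\tau}^1 - \wh X_{\mathrm{BUG},\bar\tau}^1$ decomposes cleanly as a sum over nodes of connecting-tensor differences requires carefully using the orthonormality of \emph{all} shared basis matrices (including the recursively-built inner $\wh\bfU_\tau$), and verifying that the reduced functions $F_\tau$ retain the Lipschitz/boundedness constants and that the reduced normal-component bound $\eps$ is inherited at every subtree --- the latter two facts are established in~\cite{CLW2021,CLS2023} and can be cited, but the argument should spell out that they feed into the per-node estimates above uniformly over the tree.
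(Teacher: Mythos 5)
Your overall strategy (compare the pre-truncation output with the rank-adaptive TTN BUG, redo the three Tucker block estimates at each node, then Lady Windermere) matches the paper's one-line proof sketch, but the central reduction rests on a premise that is not true and that the paper's cited route (induction over the height of the tree as in Theorem 6.1 of \cite{CLW2021}, cf.\ also \cite{CLS2023}) is designed to avoid. You claim that the parallel and rank-adaptive TTN integrators ``share all orthonormal factors'' so that $\wh Y_{\bar\tau}^1$ and $\wh Y_{\mathrm{BUG},\bar\tau}^1$ differ only in their connecting tensors, and that the TTN norm of the difference then splits into per-node connecting-tensor differences. This holds in the Tucker case (and at the leaves, where the subflow $\Phi_l$ is identical for both methods), but it fails at the inner nodes: the augmented basis $\wh\bfU_{\tau_i}$ of an inner subtree is computed from $(\mat_0(\wh C_{\tau_i}^0)^\top,\mat_0(\wh C_{\tau_i}^1)^\top)$, and the augmented connecting tensor $\wh C_{\tau_i}^1$ differs between the two integrators (rank-$r$ Galerkin in the old bases plus zero blocks versus rank-$2r$ Galerkin in the augmented bases). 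The paper's comparison subsection says only that this $0$-mode augmentation is performed \emph{by the same procedure}, not that it yields the same matrices. Moreover, even though the two augmented connecting tensors agree up to $O(h^2)$ blockwise, an orthonormal basis of the range of a matrix is not a Lipschitz function of that matrix uniformly in its small singular values, so you cannot conclude that the inner $\wh\bfU_{\tau_i}$ of the two methods coincide or are close with constants independent of small singular values --- which is exactly the robustness the theorem asserts. Hence the clean ``shared frames, sum of core differences'' decomposition of $\Vert \wh Y_{\bar\tau}^1-\wh Y_{\mathrm{BUG},\bar\tau}^1\Vert$ does not go through as stated.

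The fix is the induction over the tree height that the paper invokes: at each node one compares the parallel update with a suitable reference on the \emph{subtree}, the contribution of the (differing) subtree bases being absorbed through the induction hypothesis on the subtree error, with the reduced functions $F_\tau$ inheriting $L$, $B$ and the $\eps$-bound through the prolongations and restrictions; only then does the root-level estimate reduce to the three Tucker block estimates you describe. Two smaller inaccuracies to repair along the way: the $\wt C_{\tau_i}^1$ blocks are \emph{not} ``identical by construction'' in the rank-adaptive BUG (there they arise as part of the solution of the augmented Galerkin ODE, and the $O(h^2)$ discrepancy is exactly what the Tucker proof estimates), and the restriction $\pi_{\tau,i}^{\dagger}$ is a contraction rather than an isometry (only the prolongation is norm-preserving), which is what guarantees that $L$, $B$ and $\eps$ are inherited by the reduced subproblems.
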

The proof uses similar arguments to the proof of Theorem~\ref{thm:error_bound_tucker} and uses induction over the height of the tree as in Theorem 6.1 in \cite{CLW2021}. 
Therefore, we omit a detailed proof.

\section{Numerical experiments}\label{sec:resultsnum}
We verify the theoretical results by applying the proposed algorithm to a quantum spin system and a problem from radiation transfer. All code is provided online \cite{code}.

\subsection{Long-range interacting quantum system}
Consider an Ising model with long-range interacting particles, where the Schrödinger equation and its corresponding Hamiltonian operator read  
\begin{align}
    i \partial_t \psi  = H \psi, \ \ \text{with} \ \  H = \Omega \sum_{k=1}^d \sigma_x^{(k)} + \Delta \sum_{k=1}^d n^{(k)} + V \sum_{k \neq h}^d \frac{1}{\vert k - h \vert^\alpha} n^{(k)} n^{(h)},\label{eq:long_range_Hamiltonian}
\end{align}
see \cite{saffman2010} for a more detailed description. In this model $\Omega$, $\Delta$, $\alpha \geq 0$ and $V$ are real parameters, $\sigma_x$ the first Pauli matrix, $ n=\begin{pmatrix}
    1 & 0 \\ 0 & 0
\end{pmatrix}$ the projector onto the excited state and $\sigma^{(k)} = (\I \otimes \dots \otimes \I \otimes \sigma \otimes \I \otimes \dots \otimes \I)$ denotes the matrix $\sigma$ acting on the $k$th particle. The first two terms in the Hamiltonian describe a driving term with Rabi frequency $\Omega$ and detuning $\Delta$, e.g. from a laser. The third term describes two particles interacting with each other if and only if both are excited. The parameter $\alpha$ allows to interpolate between different regimes, for example, $\alpha=0$ encodes an all to all interaction. In quantum physics-related settings, values of interest are given by $\alpha = 1$ (Coulomb interaction), $\alpha = 3$ (dipole-dipole interaction) or $\alpha = 6$ (van der Waals interaction) \cite{saffman2010}. \\
For an efficient implementation, we represent the Hamiltonian in tree tensor network format using an HSS (hierarchical semi-separable) decomposition of the corresponding interaction matrices $\boldsymbol{\beta}_{\text{diag}} = \text{diag}(\Omega + \Delta )$ for the diagonal term and $\boldsymbol{\beta}_{\text{int}} = \left(\frac{V}{\vert k-h\vert^\alpha}\right)_{k \neq h}^d$ for the interaction term, as proposed in \cite{ceruti2024_TTNO}. \\
 
We verify the error bound of Theorem \ref{thm:error_bound_TTN} by applying the algorithm to the Schrödinger equation  (\ref{eq:long_range_Hamiltonian}) with strongly and long-range interacting sites, i.e. $\alpha = 1$, where we start from an initial state where all particles are in spin up. The simulations used a balanced binary tree structure and the ODEs in each substep are solved using a fourth-order Runge–Kutta scheme. In the left part of figure~\ref{fig:error_rank_spin_system}, we compare the approximated solution for $d=8$ particles at time $T=1$ with an exact solution. The exact solution is obtained by an exact diagonalization of the Hamiltonian, which can still be done for a system of that size. For comparison, we also include the error of the rank-adaptive BUG integrator for TTNs. We observe that the parallel BUG indeed shows an error of the order $\mathcal{O}(h)$, while the errors of the rank-adaptive BUG behave more like $\mathcal{O}(h^2)$. This coincides with the results in \cite{ceruti2024}. Further, we investigate how the ranks behave over time. The right part of figure~\ref{fig:error_rank_spin_system} shows that the parallel TTN integrator chooses slightly larger maximal ranks compared to the rank-adaptive BUG integrator when choosing the same truncation tolerance $\vartheta$. 



\begin{figure}
    \centering
    \begin{minipage}[t]{0.43\textwidth}
        \includegraphics[scale=0.22]{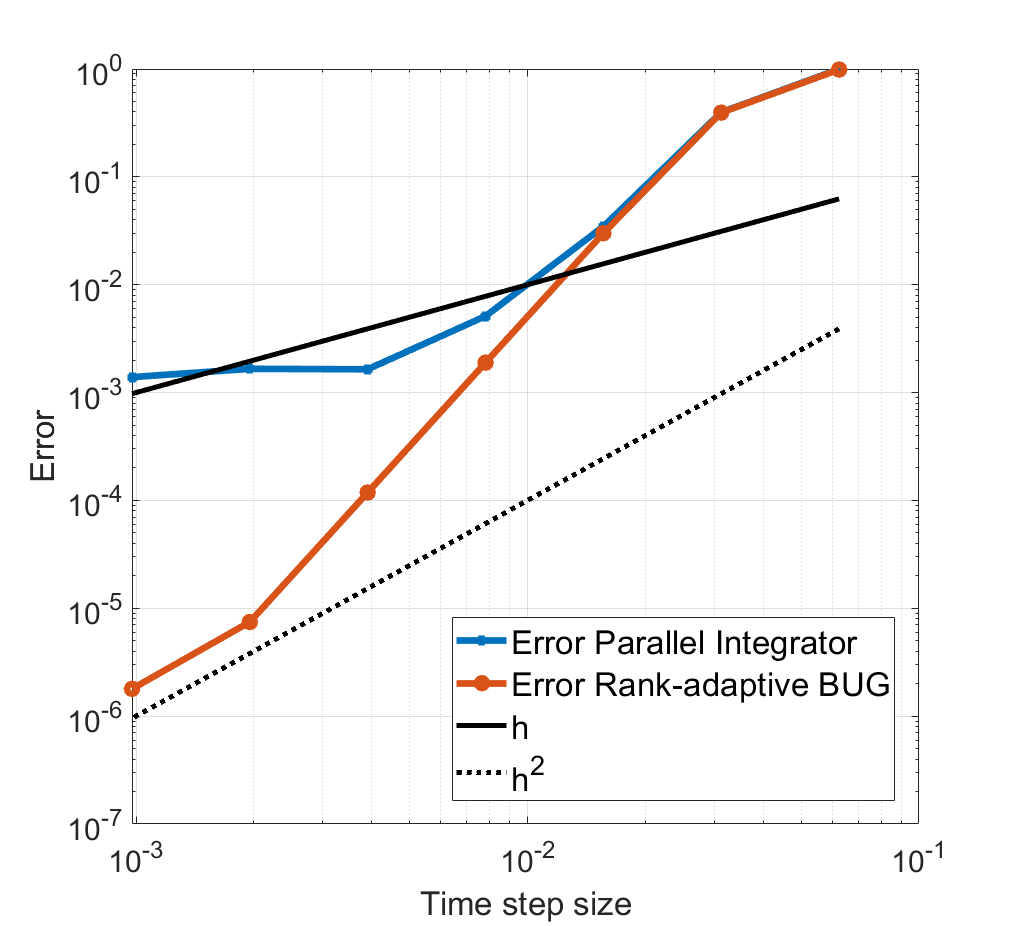}
    \end{minipage}
    \begin{minipage}[t]{0.43\textwidth}
        \includegraphics[scale=0.22]{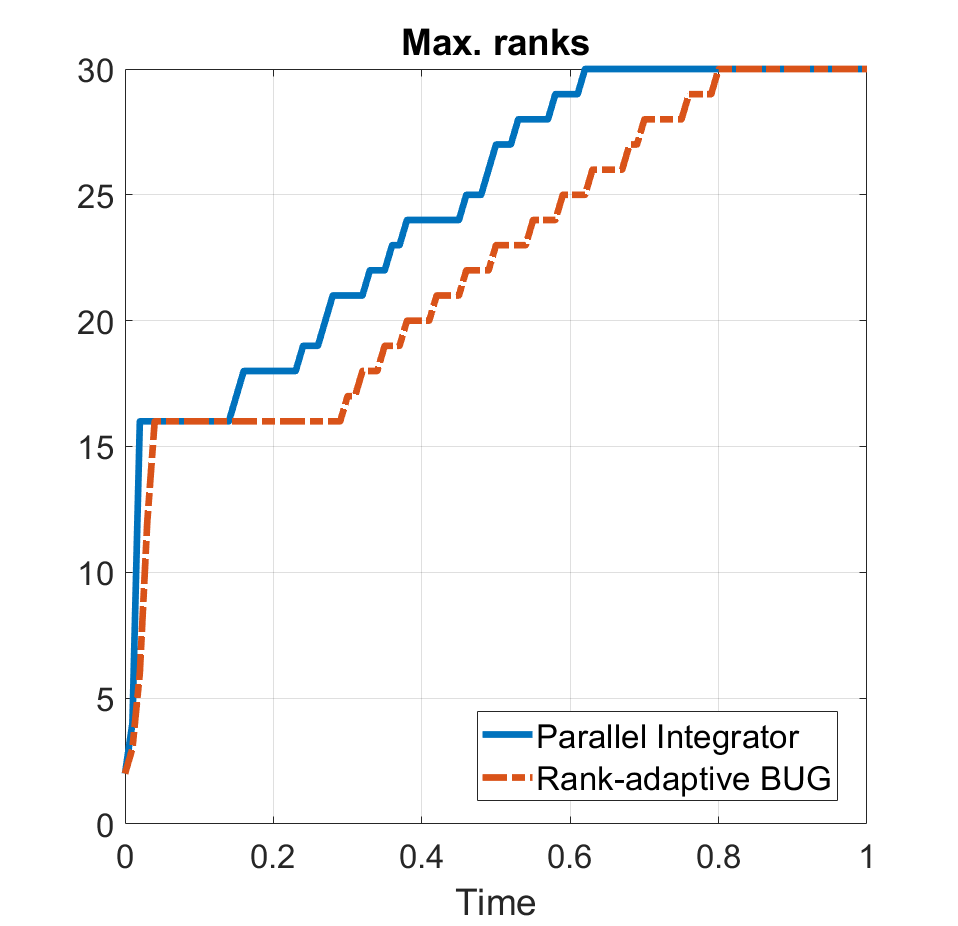}
    \end{minipage}
    \caption{Unspecified parameters are $\Omega=\Delta=V=\alpha=1$, and $\vartheta=10^{-8}$. Left: Errors for $d=8$ particles at time $T=1$. Right: Maximal ranks over time for $d=16$ particles and $r_{\text{max}}=30$.}
    \label{fig:error_rank_spin_system}
\end{figure}


\subsection{Radiative Transfer - Planesource}\label{sec:planesource}
This section investigates the planesource benchmark of radiative transfer \cite{ganapol2008analytical} with an uncertain scattering cross-section. The radiative transfer equation for $f = f(t,x,\mu,\xi,\eta)$ without absorption reads
\begin{align*}
    \partial_t f + \mu\partial_x f = \frac12 \sigma_s\int_{-1}^1f\, d\mu,
\end{align*}
where $\mu\in[-1,1]$ is the projected direction of flight and $\xi,\eta \sim \mathcal{U}(-1,1)$ are uniformly distributed random variables in the interval $[-1,1]$. The scattering cross-section $\sigma_s$ is chosen as $\sigma_s(\xi,\eta) = \sigma_{s,0} + \xi \sigma_{s,\xi} + \eta \sigma_{s,\eta}$ with parameters $\sigma_{s,0} = 5$, $\sigma_{s,\xi} = 4$, and $\sigma_{s,\eta} = 1$. The initial condition is $f(0,x,\mu,\xi,\eta) = \max\{10^{-4}, 1 / \sqrt{2\pi\delta \mathrm{exp}(-x^2/2\delta)}\}$ with $\delta = 0.03^2$. This initial condition resembles the planesource benchmark. Note, however, that the plansource benchmark commonly has deterministic scattering cross-sections. For a detailed discussion of the deterministic planesource benchmark with DLRA, see, e.g., \cite[Section~7.1]{kusch2023stability}. We are now interested in determining the expected value and variance of the scalar flux $\rho(t,x,\xi,\eta) = \int_{-1}^1f\, d\mu$ at time $t_{\mathrm{end}} = 2$. We use a time step size of $h = 0.1 \Delta x$ and use a fourth-order Runge--Kutta scheme. The spatial discretization uses a first-order upwind method with $200$ spatial cells. Moreover, $100$ modal expansion coefficients (moments) are used to discretize the angular domain, and each random variable is discretized using $100$ nodal points using a tensorized Gauss Legendre quadrature. We use a binary tree to represent the solution; that is, a node connects spatial and directional domains while another node connects the uncertain domains. Figure~\ref{fig:Erho} depicts the expected scalar flux, while Figure~\ref{fig:Varrho} depicts the corresponding variance. It is observed that while the parallel integrator is more than twice as fast compared to the rank-adaptive BUG integrator (12 seconds for the parallel integrator compared to 30 seconds for the rank-adaptive BUG integrator), the parallel integrator shows a slightly dampened variance profile. Overall, both methods show satisfactory agreement with the reference collocation solution, which uses a tensorized Gauss Legendre grid in the uncertainty with $100$ collocation points in each dimension. The ranks chosen by the two integrators are shown in Figure~\ref{fig:ranks1D}.

\begin{figure}[h]
    \centering
    \includegraphics[scale=0.4]{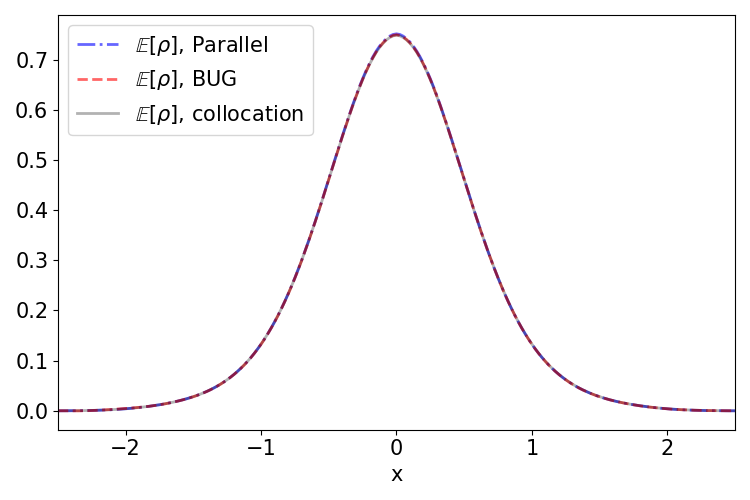}
    \caption{Expected scalar flux at time $t = 2$, using $200$ spatial cells, $100$ moments in direction, and $100$ points in both random variables. The parallel integrator takes 12 seconds, while the rank-adaptive BUG integrator takes 30 seconds.}
    \label{fig:Erho}
\end{figure}

\begin{figure}[h]
    \centering
    \includegraphics[scale=0.4]{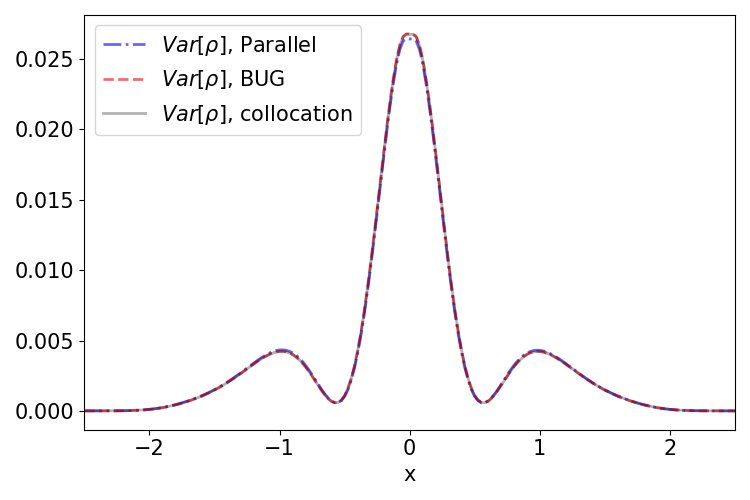}
    \caption{Variance of the scalar flux at time $t = 2$, using $200$ spatial cells, $100$ moments in direction, and $100$ points in both random variables. The parallel integrator takes 12 seconds, while the rank-adaptive BUG integrator takes 30 seconds.}
    \label{fig:Varrho}
\end{figure}

\begin{figure}[h]
    \centering
    \includegraphics[scale=0.45]{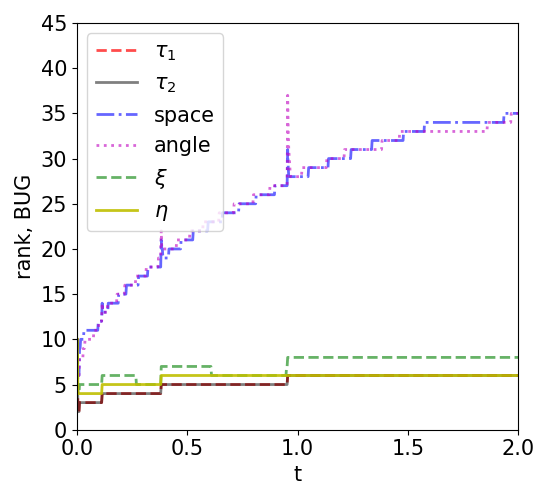}
    \includegraphics[scale=0.45]{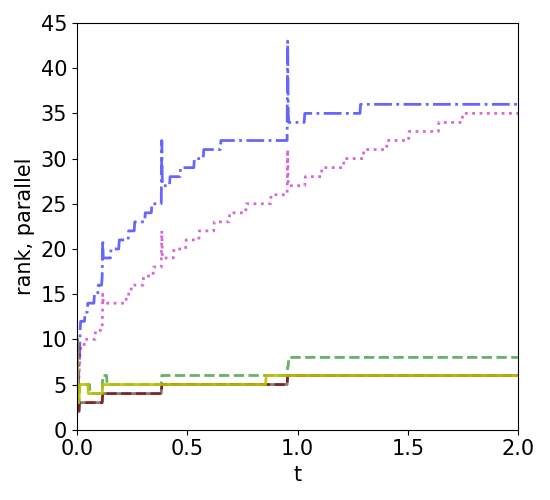}
    \caption{Ranks chosen by the integrator during simulation. The root tensor always has rank $1$. The connecting tensor of the tree $\tau_1$ connects spatial and angular dimensions, whereas the connecting tensor of the tree $\tau_2$ connects the uncertain dimensions.}
    \label{fig:ranks1D}
\end{figure}

\subsection{Radiative Transfer - Linesource}

This section investigates the linesource benchmark with uncertain scattering and absorption cross-sections. The linesource benchmark is a particularly challenging benchmark which is often used to showcase deficiencies of numerical methods \cite{garrett2013comparison}. It solves the two-dimensional radiative transfer equations
\begin{equation}
	\label{eq:rt2d}
	\begin{aligned}
		&\partial_t f + \mathbf{\Omega}\cdot\nabla f + \sigma_t f = \frac{\sigma_s}{4\pi} \int_{\mathbb{S}^2} f \,d\mathbf{\Omega} + \sigma_a f,\qquad 
		  \; \\[1mm]
		& f(t = 0) = \max\left\{10^{-4},\frac{1}{4\pi \sigma^2}\cdot \mathrm{exp}\left(-\frac{\Vert \mathbf{x}\Vert^2}{4\sigma^2}\right)\right\}\,,
	\end{aligned}
\end{equation}
where the scattering and absorption cross-sections are denoted by $\sigma_s$ and $\sigma_a$, respectively. Unlike the standard linesource benchmark, we assume an uncertain scattering cross-section $\sigma_s = 1 + \xi$ where $\xi$ is a random variable that is uniformly distributed in the interval $[-1, 1]$, that is, $\xi \sim \mathcal{U}([-1, 1])$. Similarly, the absorption cross-section is given by $\sigma_a = 1 + \eta$ where $\eta \sim \mathcal{U}([-1, 1])$. As a numerical solver, we use a first-order upwind method with $100^2$ spatial cells. Each uncertain dimension is discretized with $100$ nodal points using a tensorized Gauss Legendre quadrature. The angular domain is discretized with a spherical harmonics (P$_N$) method using $961$ modal expansion coefficients (moments). Similar to the planesource testcase in Section~\ref{sec:planesource}, we use a binary tree to represent the solution.  Figure~\ref{fig:Erho2D} depicts the expected scalar flux, while Figure~\ref{fig:Varrho2D} depicts the corresponding variance. The parallel integrator shows an improved runtime of $72$ seconds compared to $105$ seconds for the rank-adaptive BUG integrator. The numerical results of both integrators agree well. The star-like artifact results from the numerical discretization, which is common in such problems, see, e.g., \cite{ceruti2022rank}. The tolerance has been chosen such that the ranks of the parallel and rank-adaptive BUG integrators are similar, leading to a tolerance of $\vartheta = 2\cdot 10^{-2}$ for the parallel integrator and $\vartheta = 3\cdot 10^{-2}$ for the augmented integrator. The ranks chosen during the simulation can be found in Figure~\ref{fig:ranks2D}.  


\begin{figure}
     \centering
     \begin{subfigure}[b]{0.49\textwidth}
         \centering
         \includegraphics[width=\textwidth]{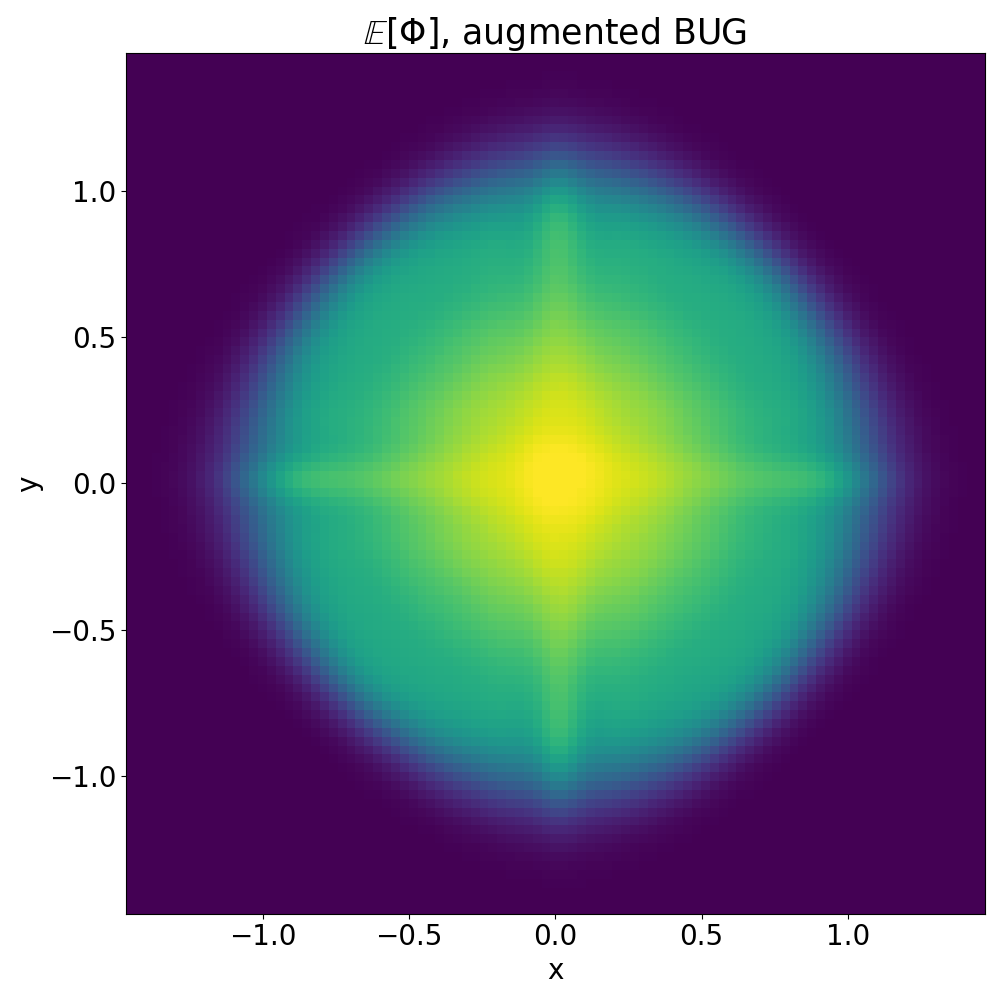}
         \caption{}
     \end{subfigure}
     \hfill
     \begin{subfigure}[b]{0.49\textwidth}
         \centering
         \includegraphics[width=\textwidth]{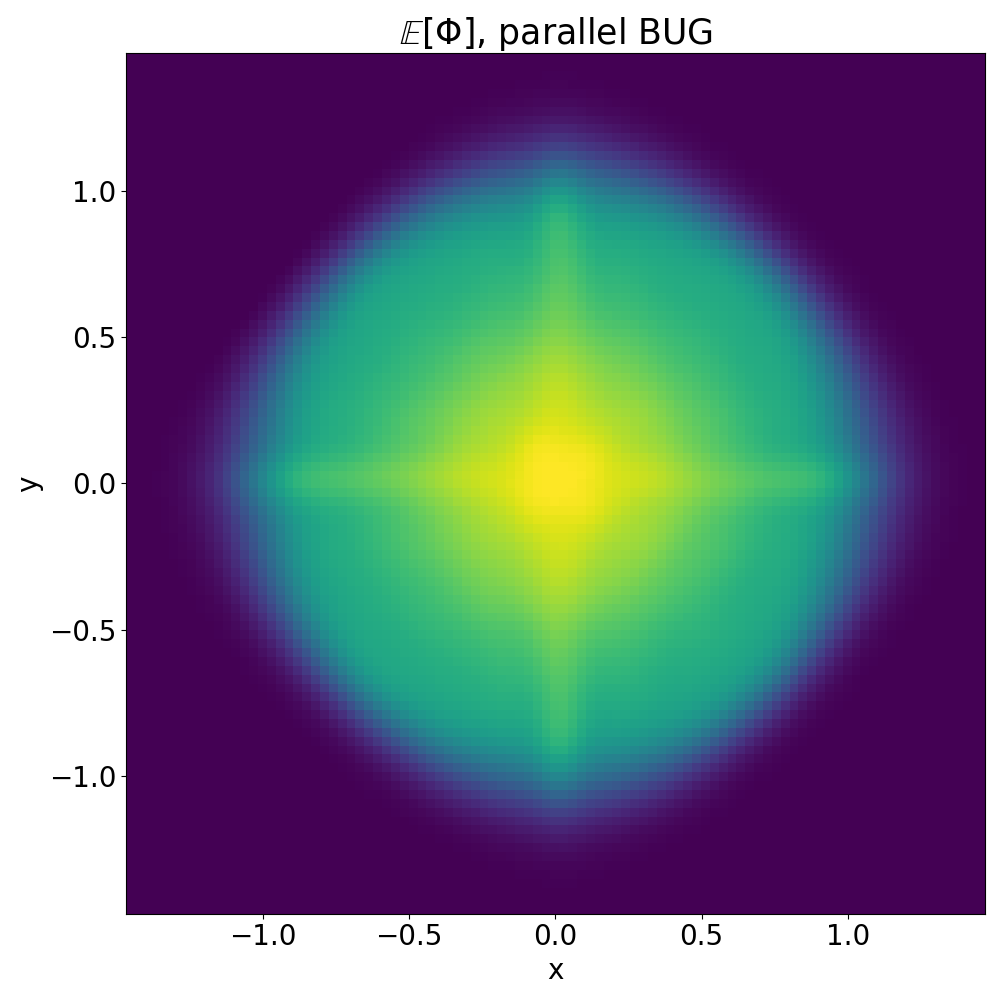}
         \caption{}
     \end{subfigure}
        \caption{
		Expected scalar flux at time $t_{\mathrm{end}} = 1.0$.}
	\label{fig:Erho2D}
\end{figure}

\begin{figure}
     \centering
     \begin{subfigure}[b]{0.49\textwidth}
         \centering
         \includegraphics[width=\textwidth]{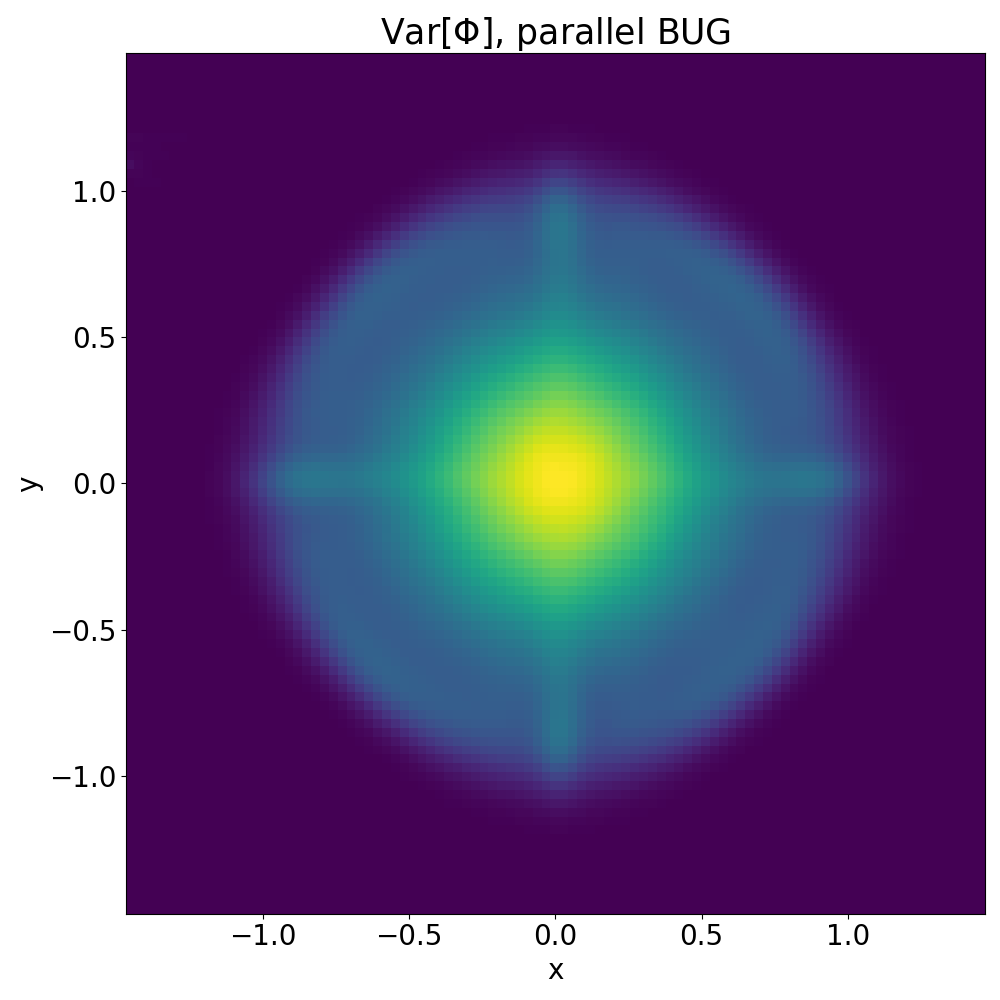}
         \caption{}\label{fig:augmented_BUG_LS}
     \end{subfigure}
     \hfill
     \begin{subfigure}[b]{0.49\textwidth}
         \centering
         \includegraphics[width=\textwidth]{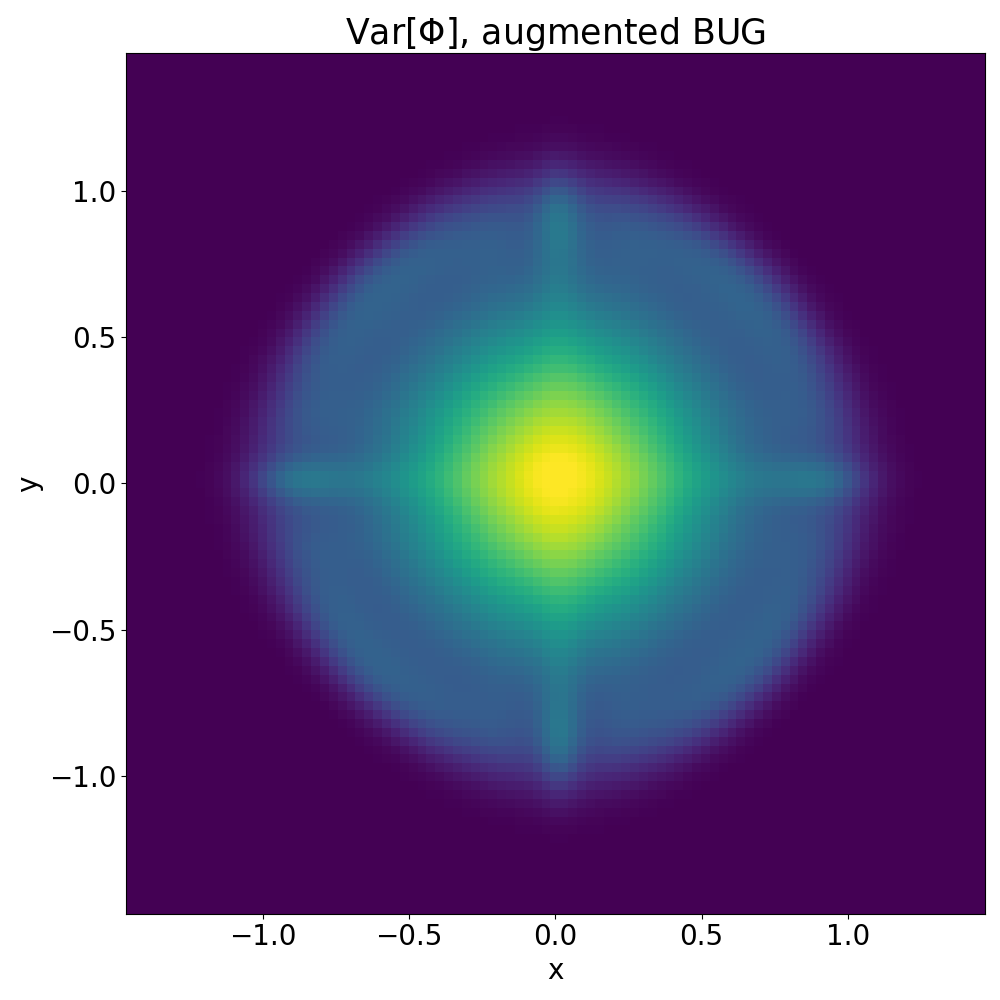}
         \caption{}
         \label{fig:scalar_flux_PN_Lattice_nx350_N21}
     \end{subfigure}
        \caption{Variance of the scalar flux at time $t_{\mathrm{end}} = 1.0$.}
	\label{fig:Varrho2D}
\end{figure}

\begin{figure}[h]
    \centering
    \includegraphics[scale=0.45]{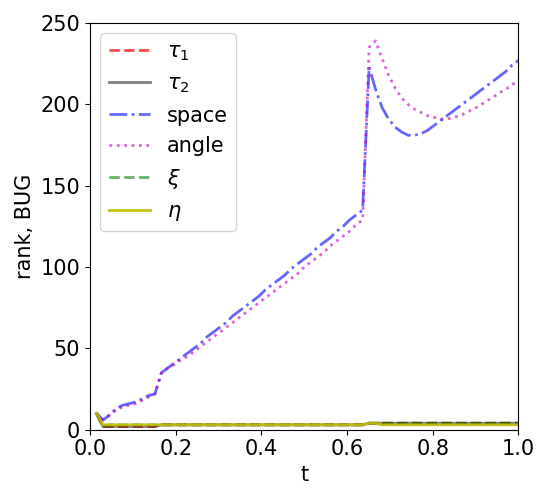}
    \includegraphics[scale=0.45]{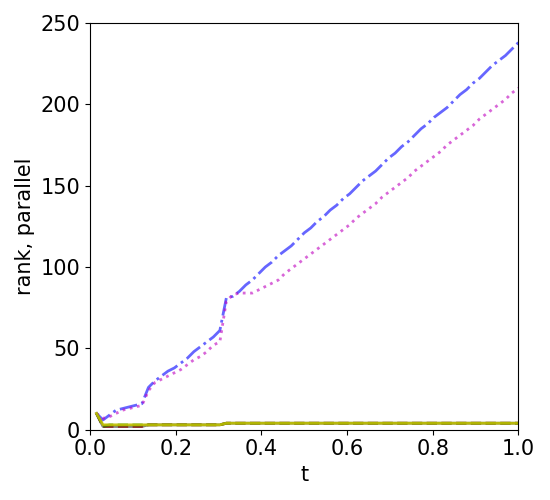}
    \caption{Ranks chosen by the integrator during simulation. The root tensor always has rank $1$. The connecting tensor of the tree $\tau_1$ connects spatial and angular dimensions, whereas the connecting tensor of the tree $\tau_2$ connects the uncertain dimensions.}
    \label{fig:ranks2D}
\end{figure}

\bibliographystyle{siamplain}
\bibliography{main}
\end{document}